\documentclass[12pt]{amsart}
\usepackage[top=1in, bottom=1in, left=1.25in, right=1.25in]{geometry}

\usepackage{amsmath, amssymb, amsthm}
\usepackage{mathtools}
\usepackage{hyperref}
\usepackage{cleveref}
\usepackage[utf8]{inputenc}
\usepackage[T1]{fontenc}
\usepackage{enumitem}

\theoremstyle{plain}
\newtheorem{theorem}{Theorem}[section]
\newtheorem{proposition}[theorem]{Proposition}
\newtheorem{lemma}[theorem]{Lemma}
\newtheorem{corollary}[theorem]{Corollary}

\theoremstyle{definition}
\newtheorem{definition}[theorem]{Definition}

\newtheorem{remark}[theorem]{Remark}

\newcommand{\CC}{\mathbb{C}}
\newcommand{\ZZ}{\mathbb{Z}}
\newcommand{\RR}{\mathbb{R}}
\newcommand{\Lalg}{{\mathsf{L}}}
\newcommand{\NN}{\mathbb{N}}
\newcommand{\fg}{\mathfrak{g}}

\newcommand{\Hm}{{\mathcal{H}_m}}     
\newcommand{\Mf}{M(\varphi)}          
\newcommand{\Sf}{\mathcal{S}_\varphi} 
\newcommand{\Pn}{\{P_n\}}             
\newcommand{\vf}{v_\varphi}           
\newcommand{\Om}{\Omega}              

\DeclareMathOperator{\Span}{Span}

\title[Sugawara--Legendre mechanism]{A Sugawara--Legendre mechanism\\
  for the four-point Heisenberg algebra}

\author{Felipe Albino dos Santos}
\address{Universidade Presbiteriana Mackenzie, São Paulo, Brazil}
\email{falbinosantos@gmail.com}
\thanks{Supported by FAPESP grant 2024/14914-9.}

\date{}

\keywords{Krichever--Novikov algebras, four-point affine algebras,
  genus-zero curves, Heisenberg algebras, universal central extensions,
  Verma modules, Shapovalov form, Sugawara construction,
  Legendre polynomials, Legendre operator, irreducibility criteria}

\subjclass[2020]{17B67, 17B65, 33C45}

\begin{document}

\begin{abstract}

This paper provides a representation-theoretic explanation for the recent discovery that families of orthogonal polynomials arise in the centers of universal central extensions of superelliptic affine Lie algebras. Working in the Verma modules of the Heisenberg subalgebra, we compute the canonical contravariant form in closed form on a distinguished family of weight vectors, and show that this family is the Legendre family, forced by the curve itself. The classical squared norms follow by a rescaling available under a positivity condition on the central charge, and the module is irreducible exactly when that charge is non-zero. The main result identifies a Sugawara element carried, by an explicit intertwiner, to the classical Legendre differential operator. The results are proved for the genus-zero, four-point case. 

\end{abstract}

\maketitle
\setcounter{tocdepth}{1}
\tableofcontents

\section{Introduction}
\label{sec:intro}

The two strands of mathematics joined in this paper are the classical
theory of orthogonal polynomials and the representation theory of
infinite-dimensional Lie algebras attached to algebraic curves. The
former goes back to the late eighteenth century, when Legendre
introduced what are now called the Legendre polynomials in his study
of gravitational potentials. The classical families (Legendre,
Hermite, Laguerre, Jacobi) and their orthogonality properties were
codified in the nineteenth century and brought into modern form in
Szeg\H{o}'s monograph \cite{Szego1939}; see also
\cite{Whittaker1927} for the analytic background. Each classical
family is characterised by a three-term recurrence and a second-order
differential equation, two conditions which are equivalent under the
correspondence \cite[\S 3.2]{Szego1939}.

The latter strand begins with Krichever and Novikov, who introduced
algebras of meromorphic vector fields on a compact Riemann surface
with poles confined to \emph{two} marked points, as higher-genus
analogues of the Witt and Virasoro algebras
\cite{KricheverNovikov1987}. The generalisation to a finite set of
more than two marked points (together with the almost-graded
structure that makes the representation theory tractable) is due to
Schlichenmaier \cite{Schlichenmaier1990a,Schlichenmaier1990b}; a
related multipoint construction was given independently by Sadov
\cite{Sadov1991}. The general theory of Krichever--Novikov type
algebras and their universal central extensions has since been
developed systematically; we refer to
\cite{Schlichenmaier2014,SchlichenmaierSurvey} for a detailed
account. The representation-theoretic side draws on the
Verma-module formalism for affine Lie algebras
\cite{KacRaina,KacWakimoto1988} and the broader theory of
contravariant forms initiated by Shapovalov \cite{Shapovalov1972}; on
the conformal side it is informed by the work of Feigin and Fuchs
\cite{FeiginFuchs1990} on Virasoro modules. In a different direction,
extended affine and map-algebra modules have been the subject of an
active recent literature
\cite{BilligLau2011,CoxGuoLuZhao2016_Torus,CoxIm2018}.

The two strands meet in the superelliptic setting. For the curve
$u^m = P(t)$, explicit cocycle formulas for the universal central
extension of $\mathfrak{sl}_2 \otimes A_m$ were obtained in
\cite{SantosNeklyudovFutorny2025}, where it was observed empirically
that the centre relations realise families of orthogonal polynomials
that extend the classical list, with the Legendre family appearing
in the genus-zero case and new families in the higher branch
degrees. Irreducibility of the relevant Verma modules in
the genus-zero case was established in \cite{PhiVerma}. Left open
in both works was the representation-theoretic mechanism by which
the analytic phenomena, namely the orthogonality of the polynomial basis,
the second-order differential equation governing it, and (in special
cases) completeness, arise from the module theory of the
underlying Heisenberg subalgebra $\Hm$. The present paper supplies
that mechanism in the genus-zero case.

In an earlier strand of the author's work on universal central
extensions of Krichever--Novikov-type algebras attached to superelliptic
curves $u^m = P(x)$ \cite{SantosNeklyudovFutorny2025,PhiVerma},
orthogonal polynomial families were observed to arise as a structural
feature of the centre. For a curve of genus $g$ with $N$ marked points,
the centre of the universal central extension of
$\mathfrak{sl}_2 \otimes A_m$ has dimension $2g + N - 1$
\cite{Bremner1995,Schlichenmaier2017NPoint}; in the genus-zero,
four-point case (Section~\ref{sec:setup}) it is three-dimensional. The centre
generators satisfy three-term recurrences (the defining relation of
orthogonal polynomial systems), and for $m\ge 3$ the corresponding
polynomial families lie beyond the classical list. In that earlier
work, the analytic phenomena, namely orthogonality, the second-order
differential equation, and (in special cases) completeness of the
polynomial basis, were established by explicit computation
\cite[Thm.~3.4 and \S 4]{SantosNeklyudovFutorny2025}, but their
relation to the module theory of the Heisenberg subalgebra $\Hm$
was left open.

The present paper works out the genus-zero case $m = 2$,in
which $p(t) = 1 - 2at + t^2$ (we will say $r=1$); here $m$ is the covering degree and $r$
the branch parameter of the palindromic polynomial
$p(t) = 1 - 2at^{r} + t^{2r}$, both fixed in
Section~\ref{sec:setup}. Higher branch degrees are not treated. We
show that:
\begin{enumerate}
  \item In the genus-zero case, the canonical Shapovalov form on the
        Verma module is computed in closed form on the polynomial
        basis $\{\tilde{P}_n\}$; orthogonality is forced by the weight
        grading, and the cocycle determines the norms.
  \item In the genus-zero case, the Legendre differential operator arises
        as the image, under the intertwiner $\Phi$, of an explicit
        operator built from an explicit element of the completed
        universal enveloping algebra, after a normalisation depending on
        the central charge.
  \item In the genus-zero case, irreducibility of the module
        and completeness of the polynomial basis are equivalent to
        non-degeneracy of the Shapovalov form, and this in turn holds
        exactly when the central charge is non-zero: the contravariant
        form is diagonal in the monomial basis, so its determinant has
        no zeros beyond that one. For $m\geq 3$ the off-diagonal
        components of the cocycle break the diagonality, and the
        argument does not apply; that range lies outside the scope of
        this paper.
\end{enumerate}

The main contributions of this paper are three theorems and one corollary,
listed below.

\begin{enumerate}[label=(\Alph*)]
  \item \textbf{Theorem~\ref{thm:shapovalov-orthogonality}}:
    In the genus-zero case, the canonical contravariant form on the
    Verma module $\Mf$ is diagonal on the family $\Pn$, and
    its level-$n$ value is computed in closed form. Orthogonality is
    forced by the grading; what the cocycle determines is the norm, and
    it is the norm that makes the classical Legendre normalisation
    available, under an explicit reality hypothesis on the central
    charge.

  \item \textbf{Theorem~\ref{thm:irreducibility}}:
    In the genus-zero case ($m = 2$, $r = 1$), the module $\Mf$ is
    irreducible if and only if the Shapovalov form is non-degenerate,
    if and only if $\varphi(c)\neq 0$; the proof computes the
    Shapovalov determinant in closed form. The corresponding question
    for $m \geq 3$ is not treated here.

  \item \textbf{Theorem~\ref{thm:rep-mechanism}}:
    In the genus-zero case (with $m = 2$), there exist a Sugawara-type
element $\Lalg = \sum_{k\ge1}{:}b_{-k}b_k{:}$ of the completed
    enveloping algebra $\widetilde{U(\mathcal{H}_2)}$, acting on $\Mf$
    after normalisation as $L_0 = q^{-1}\Lalg$, together with the
    operator $\Om = -L_0(L_0+\mathrm{Id})$ built from it, a
    $U(\mathcal{H}_2^-)$-equivariant
    polynomial quotient $\psi\colon\Mf \to \CC[\zeta]$, and a Legendre identification
    $\Psi\colon\CC[\zeta]\to\CC[x]$ such that the composite $\Phi := \Psi\circ\psi$
    intertwines $\Om$ on $\Mf$ with the classical Legendre operator
    $L = (1-x^2)\partial_x^2 - 2x\partial_x$ on $\CC[x]$. Under $\Phi$, the
    level-$n$ image is the Legendre polynomial $P_n(x)$, characterised
    intrinsically as the (unique up to scalar) element of $\Phi(\Mf)$ of degree
    $n$ on which $L$ acts with eigenvalue $-n(n+1)$.

  \item \textbf{Corollary~\ref{cor:casimir-tower}}:
    The Casimir tower $\{\Om^r\}_{r\geq 1}$ acts on the level-$n$ image
    $\Phi(\Mf)\cap\CC[x]_{\leq n}$ by the scalar $(-n(n+1))^r$, and corresponds
    under $\Phi$ to the iterated Legendre operator $L^r$. A generating-function
    reformulation is given in Corollary~\ref{cor:genfun-intertwine}.
\end{enumerate}

One can relate this achievements to previous work. The contravariant form on Verma modules for
Kac--Moody algebras \cite{Shapovalov1972,KacRaina} is a standard tool for studying
irreducibility and detecting singular vectors. This paper applies the same machinery
to the Heisenberg subalgebra of the UCE in the superelliptic setting, where the
form diagonalises in a basis of polynomial vectors, identifying orthogonality
as a module-theoretic phenomenon. The Sugawara element in affine Lie algebras
\cite{KacWakimoto1988} is a canonical central element whose eigenvalues on highest-weight
modules yield the conformal weights in conformal field theory. Our element $\Om$ is
the standard Sugawara stress-energy zero mode of the free boson ($c = 1$, central
charge), built from the Heisenberg subalgebra $\mathcal{H}_2$. The contribution
of this paper is not the Sugawara element itself, but the construction of an
explicit $U(\mathcal{H}_2^-)$-equivariant polynomial quotient
$\psi\colon\Mf\to\CC[\zeta]$ on which the descent $\bar\Om$ acts diagonally with
simple spectrum, together with the Legendre identification
$\Psi\colon\CC[\zeta]\to\CC[x]$, $\zeta^n\mapsto P_n(x)$, which intertwines the descended
Sugawara operator with the classical Legendre differential operator $L$. The
combined map $\Phi = \Psi\circ\psi$ thus realises the Verma module as
a graded presentation of the polynomial eigenfunctions of $L$. Map algebras and their representations have been
extensively studied \cite{CoxGuoLuZhao2016_Torus,BilligLau2011}. Our approach borrows
the weight-space and highest-weight module formalism from this literature, applied to the
Krichever--Novikov setting.

Irreducibility criteria for Verma-type modules
over Heisenberg subalgebras, in the setting of a non-standard
polarisation of the imaginary modes, were obtained by
Bekkert--Benkart--Futorny--Kashuba \cite{BBFK2013}; the present paper
works with the standard polarisation and proves what it needs from
Definition~\ref{def:Hm} directly. A treatment of the genus-zero case in
the polarised setting is given in \cite{PhiVerma}.
We also rely on the universal central extension of loop algebras due to
\cite{KasselLoday1982} and on the classical theory of orthogonal
polynomials with three-term recurrence \cite{Szego1939}. The cocycle
formulas of \cite{SantosNeklyudovFutorny2025} are computed there for
branch degree $\deg p = 4$; the degree-two case treated here is derived
in Section~\ref{sec:setup}.
The present paper combines these inputs within the Sugawara--Legendre framework
of Section~\ref{sec:casimir}, in which the orthogonal polynomial families and
their governing differential operators arise as outputs of the module theory of
$\Hm$ in the genus-zero case.

The paper is organized as follows. Section~\ref{sec:setup} introduces the coordinate ring $A_m$ of the superelliptic curve, the Heisenberg
subalgebra $\Hm$ with its cocycle, the triangular decomposition, and the module category.
Section~\ref{sec:form} constructs the contravariant Shapovalov form on the Verma module,
proves existence and uniqueness, and computes it on the polynomial vectors
(Theorem~\ref{thm:shapovalov-orthogonality}); the same computation, carried out on the
full Poincar\'e--Birkhoff--Witt basis, yields the Shapovalov determinant and the
irreducibility criterion (Theorem~\ref{thm:irreducibility}). Section~\ref{sec:casimir} focuses on the
genus-zero case ($m=2$, $r=1$) and is the technical heart of the paper: we define the
Sugawara stress-energy zero mode $\Lalg \in \widetilde{U(\mathcal{H}_2)}$, its
normalisation $L_0$ on $\Mf$, and the Casimir operator $\Om := -L_0(L_0+\mathrm{Id})$, construct the polynomial quotient
$\psi\colon\Mf \to \CC[\zeta]$ and the Legendre identification
$\Psi\colon\CC[\zeta]\to\CC[x]$, and prove that the composite $\Phi := \Psi\circ\psi$
intertwines $\Om$ on $\Mf$ with the classical Legendre operator $L$ on $\CC[x]$
(Theorem~\ref{thm:rep-mechanism}). The Casimir tower
$\{\Om^r\}_{r\geq 1}$ then arises as a corollary
(Corollary~\ref{cor:casimir-tower}), with a generating-function reformulation
(Corollary~\ref{cor:genfun-intertwine}).
Section~\ref{sec:fock} constructs the Fock space realization of $\Mf$:
it proves $\Mf \cong \mathcal{F}_\varphi$ as $\mathcal{H}_2$-modules
(Theorem~\ref{thm:fock-iso}), identifies the Shapovalov form with the Fock
inner product, and gives explicit Fock representatives for the level-$n$
classes $\bar{P}_n \in \Phi(\Mf)$, in closed form for every $n$
(Remark~\ref{rem:fock-obstacle}).
Section~\ref{sec:examples} works the genus-zero Legendre case out
explicitly and indicates what obstructs the same argument at $m = 4$.
Section~\ref{sec:outlook} places the results in relation to earlier
work.

\section{Algebraic setup}
\label{sec:setup}

\subsection{The coordinate ring and the Heisenberg subalgebra}
\label{subsec:algebra}

Fix an integer $m \geq 2$, the \emph{covering degree}, and a polynomial
$p(t) \in \CC[t]$ of degree $d$ with no multiple zeros; the equation
$u^m = p(t)$ defines a \emph{superelliptic curve} $C$. Its
\emph{coordinate ring} is
\[
  A_m = \CC[t^{\pm 1}, u] \big/ (u^m - p(t)),
\]
the ring of regular functions on the affine part of $C$ with the fibres
over $t = 0$ and $t = \infty$ removed. The projection $t \colon C \to \mathbb{P}^1$
exhibits $C$ as an $m$-sheeted covering of the projective line, ramified
over the zeros of $p$ and possibly over $t = \infty$; on its affine part
$C$ is non-singular, since $p$ has no multiple zeros. By the
Riemann--Hurwitz formula the genus of $C$ is
\[
  g = \tfrac{1}{2}\bigl[\, m(d-1) - d - \gcd(m,d) \,\bigr] + 1 .
\]

Throughout this paper we take $p$ to be the \emph{palindromic} polynomial
\[
  p(t) = 1 - 2a t^r + t^{2r}, \qquad a \in \CC, \quad r \geq 1,
\]
of degree $d = 2r$, which has no multiple zeros for $a \neq \pm 1$. For
$m = 2$ the genus is then $g = r - 1$: the case $r = 1$ (degree two) is
rational, $r = 2$ (a quartic) has genus one, and $r \geq 3$ is
hyperelliptic in the usual sense. Our main results concern the case
$m = 2$, $r = 1$.

This case merits closer description. Here $p(t) = 1 - 2at + t^2$ has degree two, so the
curve $u^2 = p(t)$ is rational, and the covering $t\colon C \to
\mathbb{P}^1$ is unramified over both $t=0$ and $t=\infty$; two points of
$C$ lie over each. The algebra $A_2$ is therefore the ring of functions
regular away from these \emph{four} marked points, and $\mathfrak{sl}_2
\otimes A_2$ is the four-point affine Lie algebra of Bremner
\cite[Prop.~1.1]{Bremner1995} (see also Cox \cite{Cox2008FourPoint}). It
is not a two-point Krichever--Novikov algebra. Its universal
central extension has a three-dimensional centre
\cite{Bremner1995,Schlichenmaier2017NPoint}. The palindromic symmetry
$t \mapsto 1/t$ of $p$ distinguishes one splitting of the four points
into two pairs, hence one line in this space of local cocycles; the
cocycle used throughout this paper is this symmetry-selected one. We
work in the multipoint framework of Schlichenmaier
\cite{Schlichenmaier1990a,Schlichenmaier1990b}.

Given $\fg = \mathfrak{sl}_2$ (though the construction extends to
any simple Lie algebra), consider the current algebra $\fg \otimes A_m$
and its universal central extension (UCE) $\widetilde{\fg \otimes A_m}$.
By Kassel--Loday \cite{KasselLoday1982} the defining cocycle
\[
  \gamma(x \otimes f,\; y \otimes g) \;=\; (x,y)\,\overline{f\,dg}
\]
takes its values in $\Omega^1_{A_m}/dA_m$, which is the centre described
above and which, in the genus-zero four-point case, is three-dimensional.

Two symbols are needed here and we keep them apart, because they denote
different objects. Let $c \in \Omega^1_{A_m}/dA_m$ span the line singled
out by the palindromic symmetry, and let $c^{*}$ be the corresponding
coordinate functional. For the Heisenberg-type generators introduced
below we set
\begin{equation}\label{eq:psi-def}
  \psi^{(ij)}_{kl}(a)
  \;:=\;
  c^{*}\bigl(\gamma(b^{(i)}_{k},\, b^{(j)}_{l})\bigr).
\end{equation}
Thus $\gamma$ is the cocycle of the universal central extension, with
values in a three-dimensional space, whereas $\psi^{(ij)}_{kl}(a)$ is a
scalar: the component of $\gamma$ along one of the three available
directions, namely the one the symmetry selects.

Being a component of $\gamma$, the scalar $\psi^{(ij)}_{kl}(a)$ is
skew-symmetric,
\begin{equation}\label{eq:psi-skew}
  \psi^{(ij)}_{kl}(a) \;=\; -\,\psi^{(ji)}_{lk}(a),
\end{equation}
for all $k,l$ and all $i,j$. This is inherited from Kassel--Loday and
not an extra hypothesis. The Killing form $(\,\cdot,\cdot\,)$ is
symmetric, while $\overline{f\,dg} = -\,\overline{g\,df}$ in
$\Omega^1_{A_m}/dA_m$ because $d(fg) = f\,dg + g\,df$ is exact. Closed
expressions for $\psi$ in the superelliptic setting are computed in
\cite{SantosNeklyudovFutorny2025}; for the case treated in this paper
they are given in \eqref{eq:psi-r1} below.

We write $\Hm \subset \widetilde{\fg \otimes A_m}$ for the
\emph{Heisenberg subalgebra}, the subalgebra spanned by the
Heisenberg-type generators
$\{b_n^{(j)} : n \in \ZZ, 1 \leq j \leq \lfloor m/2 \rfloor\}$ arising from
the odd powers of $u$ in the Laurent expansion. (For $\fg = \mathfrak{sl}_2$,
the relevant Chevalley generators pair to form such degrees-of-freedom.)
Restricting $\gamma$ to this subalgebra gives it a central extension of
its own, which we now record.

\begin{definition}
\label{def:Hm}
The \emph{Heisenberg subalgebra} $\Hm \subset \widetilde{\fg \otimes A_m}$
is the two-step nilpotent Lie algebra with basis
\[
  \bigl\{\, b_k^{(j)} \;:\; k \in \ZZ,\ 1 \leq j \leq \lfloor m/2 \rfloor \,\bigr\}
  \;\cup\; \{c\},
\]
with $b_k^{(j)}$ of degree $k$ and $c$ of degree $0$, and brackets
\[
  [b_k^{(i)}, b_l^{(j)}] \;=\; \psi_{kl}^{(ij)}\, c,
  \qquad [c, \Hm] \;=\; 0 ,
\]
where $\psi_{kl}^{(ij)}$ is the scalar of \eqref{eq:psi-def}. These
scalars depend on the coefficients of $p$; in the palindromic case
$p(t) = 1 - 2at^r + t^{2r}$ treated here the dependence is on the single
parameter $a$, and we write $\psi_{kl}^{(ij)}(a)$.
\end{definition}

We fix once and for all that the mode index is $k$ and that the letter
$m$ is reserved for the covering degree. Skew-symmetry of the brackets
is not an assumption here but a consequence of \eqref{eq:psi-skew},
which is why no antisymmetry hypothesis appears in the definition. Only
one case is used below, and we record it now.

For $m = 2$, $r = 1$ there is a single family of generators, and we drop
the superscript $j = 1$. Here
$\psi_{kl}(a) = k\,\delta_{k+l,0}\,\omega_1$ with $\omega_1 > 0$ a
constant depending only on $p$, so that
\begin{equation}\label{eq:psi-r1}
  [b_k, b_l] \;=\; k\,\delta_{k+l,0}\, \omega_1 \, c,
  \qquad\text{that is}\qquad
  [b_k, b_{-k}] \;=\; k\,\omega_1\,c \quad (k \neq 0),
\end{equation}
and all other brackets among the $b$'s vanish. In particular $b_0$ is
central in $\Hm$, every bracket $[b_k, b_l]$ with $k + l \neq 0$
vanishes, and $\Hm$ is the classical infinite-dimensional Heisenberg
algebra, the constant $\omega_1$ amounting to a rescaling of the central
element.

\subsection{Triangular decomposition}
\label{subsec:triangular}

The \emph{triangular decomposition} of $\Hm$ is defined by the
natural grading on generators with respect to the Laurent degree $n$:
\[
  \Hm = \Hm_+ \oplus \Hm_0 \oplus \Hm_-,
\]
where
\begin{itemize}
  \item $\Hm_+ = \bigoplus_{n > 0} \CC b_n^{(j)}$ (positive-degree generators),
  \item $\Hm_0 = \bigoplus_{j} \CC b_0^{(j)} \oplus \CC c$ (degree-zero Cartan),
  \item $\Hm_- = \bigoplus_{n < 0} \CC b_n^{(j)}$ (negative-degree generators).
\end{itemize}
This is the standard algebraic triangular decomposition, grading by
the Laurent index $n$. Geometrically, this coincides with the
almost-graded structure of Schlichenmaier \cite{Schlichenmaier2014}
restricted to the Heisenberg subalgebra of the universal central
extension. (The alternative geometric decomposition, the in/out decomposition
from the branch points, is distinct; for this paper, we use the
algebraic grading.)

The triangular decomposition makes $\Hm$ suitable for the standard
construction of Verma modules and highest-weight representations.

\subsection{The module category}
\label{subsec:module-category}

We work with the standard category of highest-weight $\Hm$-modules.
An $\Hm$-module $M$ is called a \emph{weight module} if it decomposes as
\[
  M = \bigoplus_{\mu \in \Hm_0^*} M_\mu,
\]
where $M_\mu = \{v \in M : b_0^{(j)} v = \mu(b_0^{(j)}) v \text{ and } c v = \mu(c) v\}$.

A highest-weight module for $\Hm$ is specified by:
\begin{enumerate}
  \item A linear functional $\varphi : \Hm_0 \to \CC$ (the \emph{weight}).
        No condition is imposed on $\varphi(c)$ here. The value
        $\varphi(c) = 0$ is permitted, and
        Theorem~\ref{thm:irreducibility} identifies it as exactly the
        degenerate case.
  \item A highest-weight vector $v_\varphi$ satisfying:
        \begin{itemize}
          \item $b_n^{(j)} v_\varphi = 0$ for all $n > 0$ and all $j$,
          \item $b_0^{(j)} v_\varphi = \varphi(b_0^{(j)}) v_\varphi$,
          \item $c v_\varphi = \varphi(c) v_\varphi$.
        \end{itemize}
  \item The action of $\Hm_-$ is locally nilpotent, i.e.,
        for each $v \in M$ and each $n < 0$, the operators $b_n^{(j)}$
        act nilpotently on the finite-dimensional weight spaces.
\end{enumerate}
Such modules are called $\varphi$-modules in the language of \cite{CoxGuoLuZhao2016_Torus,BilligLau2011},
where analogous constructions for map algebras are developed.

The main object of study in this paper is the \emph{Verma module},
the universal highest-weight module generated by a highest-weight vector
of weight $\varphi$.

\section{The Verma module and the canonical form}
\label{sec:form}

\subsection{The Verma module}
\label{subsec:verma}

\begin{definition}
\label{def:verma}
For a linear functional $\varphi : \Hm_0 \to \CC$, the \emph{Verma
module} of weight $\varphi$ is the induced module
\[
  \Mf := U(\Hm) \otimes_{U(\Hm_+ \oplus \Hm_0)} \CC_\varphi,
\]
where $\CC_\varphi$ is the one-dimensional $\Hm_+ \oplus \Hm_0$-module
determined by
\begin{itemize}
  \item $b_k^{(j)} \cdot 1 = 0$ for all $k > 0$ and all $j$,
  \item $b_0^{(j)} \cdot 1 = \varphi(b_0^{(j)})$,
  \item $c \cdot 1 = \varphi(c)$.
\end{itemize}
We write $\vf$ for the image of $1$, the highest-weight vector.
\end{definition}

Following Definition~\ref{def:Hm}
the bracket $[b_k^{(i)}, b_l^{(j)}]$ vanishes whenever $k, l \geq 0$,
since $\psi^{(ij)}_{kl}$ is supported on $k+l=0$ and the structure
constant carries the factor $k$, so $\Hm_+ \oplus \Hm_0$ is abelian and
every linear functional on it is a character. In particular $\Mf$ is
defined for every $\varphi$, including $\varphi(c) = 0$.

A word on terminology, since the letter $\varphi$ is used differently in
part of the literature. Bekkert, Benkart, Futorny and Kashuba
\cite{BBFK2013} attach a module to a \emph{polarisation}, a sign
function $\phi\colon\NN\to\{\pm\}$ that decides which imaginary modes
are placed in the positive part of the triangular decomposition, and
call the result a $\phi$-Verma module. Here $\varphi$ is not a
polarisation but a weight, a linear functional on $\Hm_0$, and the
polarisation is the standard one, all positive modes annihilating
$\vf$. In the notation of \cite{BBFK2013} this is the case
$\phi \equiv +$, for which their module is, in their words, the usual
Verma module for the Heisenberg algebra \cite[\S 3.1]{BBFK2013}. 

By the PBW theorem, $\Mf$ has a $\CC$-basis consisting of the monomials
$b_{-\lambda_1}^{(j_1)} \cdots b_{-\lambda_s}^{(j_s)} \vf$ with
$\lambda_1 \geq \cdots \geq \lambda_s > 0$. In the genus-zero case
there is a single family of generators, the superscripts are absent, and
such a monomial is determined by the partition
$\lambda = (\lambda_1,\dots,\lambda_s)$ alone; we then write
\begin{equation}\label{eq:pbw-partition}
  b_{-\lambda}\,\vf \;:=\; b_{-\lambda_1}\cdots b_{-\lambda_s}\,\vf .
\end{equation}
The grading is inherited from the degree: the weight-$(-n)$ subspace
$\Mf[-n]$ has basis $\{\,b_{-\lambda}\vf : \lambda \vdash n\,\}$, of
dimension the number $p(n)$ of partitions of $n$.

\subsection{The contravariant form}
\label{subsec:contravariant-form}

\begin{definition}
\label{def:omega-involution}
Define the linear map $\omega : \Hm \to \Hm$ by
\[
  \omega(b_n^{(j)}) = b_{-n}^{(j)}, \quad \omega(c) = c.
\]
This is an anti-involution: $\omega(\omega(x)) = x$ and $\omega([x,y]) = [\omega(y), \omega(x)]$.
Extend $\omega$ to $U(\Hm)$ as an anti-automorphism of the universal
enveloping algebra, so that
\[
  \omega(xy) = \omega(y) \omega(x).
\]
This is the standard contravariance map for the Heisenberg algebra with
respect to the triangular decomposition $\Hm = \Hm_+ \oplus \Hm_0 \oplus \Hm_-$.
\end{definition}

\begin{lemma}
\label{lem:shapovalov-existence}
There exists a unique (up to scalar) symmetric bilinear form
$\Sf : \Mf \times \Mf \to \CC$ satisfying:
\begin{enumerate}[label=(\roman*)]
  \item Normalization: $\Sf(\vf, \vf) = 1$.
  \item Contravariance: $\Sf(x \cdot v, w) = \Sf(v, \omega(x) \cdot w)$
        for all $x \in U(\Hm)$ and $v, w \in \Mf$.
  \item Weight preservation: $\Sf(v, w) = 0$ whenever $v$ and $w$
        lie in distinct weight spaces of $\Mf$.
\end{enumerate}
\begin{proof}
\textit{Uniqueness:} Suppose $\Sf$ satisfies (i)--(iii). By contravariance,
\[
  \Sf(u \vf, v \vf) = \Sf(\vf, \omega(u) v \vf).
\]
Since $\vf$ is the unique highest-weight vector (up to scaling) and
$\omega(u) v$ may be expressed in the PBW basis, the form is determined
by its values on the basis monomials. Normalization fixes the overall scale.

\textit{Existence:} Define $\Sf$ on the highest-weight vector by
$\Sf(\vf, \vf) := 1$. For general $u, v \in U(\Hm)$, set
\[
  \Sf(u \vf, v \vf) := \varphi(\omega(u) v),
\]
where the right side is interpreted as: compute the product $\omega(u) v$
in $U(\Hm)$, then apply $\varphi$ to the component that is in $\Hm_0$
(the central element $c$ contributes $\varphi(c)$ times its coefficient).
By the PBW theorem and the structure of $\Hm$, this is well-defined and
extends to a contravariant form on all of $\Mf$. The symmetry $\Sf(v,w) = \Sf(w,v)$
follows from the antisymmetry of the cocycle $\psi$.

For further details and the standard treatment, see \cite{Shapovalov1972} and \cite[Ch.~2]{KacRaina}.
\end{proof}
\end{lemma}

Denote by $G_n$ the Gram matrix of $\Sf$ on $\Mf[-n]$ in the basis
\eqref{eq:pbw-partition}, that is, the $p(n) \times p(n)$ matrix with
entries $\Sf(b_{-\lambda}\vf,\, b_{-\mu}\vf)$ for $\lambda, \mu \vdash n$. The Kac determinant
analogue $\det(G_n)$ is evaluated in closed form in
Theorem~\ref{thm:irreducibility}, equation~\eqref{eq:shapovalov-det-r1}.
It also gives the condition on the weight under which the constructions
below behave well.

\begin{definition}
\label{def:p-admissible}
A linear functional $\varphi\colon \Hm_0 \to \CC$ is
\emph{Shapovalov-regular} if
\[
  \det(G_n) \;\neq\; 0 \qquad\text{for every } n \ge 1 .
\]
\end{definition}

This is a condition on the determinant of an explicit matrix, so it can
be checked by computation. For $m = 2$, $r = 1$ it collapses to the
single requirement $\varphi(c)\neq 0$
(Theorem~\ref{thm:irreducibility}).

\subsection{Orthogonality of $\Pn$}
\label{subsec:orthogonality}

The bridge to orthogonal polynomials needs nothing beyond the defining
polynomial of the curve and the classical uniqueness of orthogonal
polynomial sequences \cite{Szego1939}. The following lemma derives what
is required for Theorem~\ref{thm:shapovalov-orthogonality}.

\begin{lemma}
\label{lem:dictionary-properties}
Let $m = 2$, $r = 1$, so that $p(t) = 1 - 2at + t^2$. Then
\begin{enumerate}[label=\textup{(D\arabic*)}]
  \item\label{dprop:genfun} The formal expansion of $p(t)^{-1/2}$ in $t$,
    \[
      \bigl(1 - 2at + t^2\bigr)^{-1/2} \;=\; \sum_{n \geq 0} c_n(a)\, t^n ,
    \]
    has coefficients $c_n(a)$ determined by $c_0 = 1$, $c_1 = a$ and
    \begin{equation}\label{eq:legendre-rec}
      (n+1)\,c_{n+1}(a) \;=\; (2n+1)\,a\,c_n(a) \;-\; n\,c_{n-1}(a),
      \qquad n \geq 1 .
    \end{equation}
  \item\label{dprop:legendre} Consequently $c_n(a) = P_n(a)$, the $n$-th
    Legendre polynomial, and $\deg P_n = n$.
\end{enumerate}
\begin{proof}
\ref{dprop:genfun} Write $F(a,t) = (1-2at+t^2)^{-1/2}$. Differentiating,
$\partial_t F = (a-t)(1-2at+t^2)^{-3/2}$, so
\[
  (1 - 2at + t^2)\,\partial_t F + (t-a)\,F \;=\; 0 .
\]
Substituting $F = \sum_n c_n t^n$ and collecting the coefficient of $t^n$
gives \eqref{eq:legendre-rec}; the values $c_0 = 1$ and $c_1 = a$ are read
off from the expansion.

\ref{dprop:legendre} Relation \eqref{eq:legendre-rec} together with
$c_0 = 1$, $c_1 = a$ determines the whole sequence, and it is the
classical three-term recurrence of the Legendre polynomials
\cite[eq.~(4.7.17)]{Szego1939} with the same initial values. Hence
$c_n = P_n$ for all $n$, and $\deg P_n = n$ follows inductively from
\eqref{eq:legendre-rec}.
\end{proof}
\end{lemma}

For $\deg p = 4$ the analogous computation yields
associated ultraspherical and Gegenbauer families instead, and those are
the subject of \cite{SantosNeklyudovFutorny2025}.

We can now define the polynomial vectors precisely.

\begin{definition}
\label{def:polynomial-vectors}
Let $m = 2$, $r = 1$ and let $\varphi$ satisfy $\varphi(c) \neq 0$. For
$n \geq 0$ let $\mu_n \colon \Mf[-n] \to \CC$ be the linear functional
taking the value $1$ on every basis monomial $b_{-\lambda}\vf$,
$\lambda \vdash n$. The \emph{polynomial vector} $\tilde{P}_n \in \Mf[-n]$
is the representative of $\mu_n$ with respect to the Shapovalov form,
that is, the unique vector with
\begin{equation}\label{eq:Ptilde-riesz}
  \Sf\bigl(\tilde{P}_n,\; w\bigr) \;=\; \kappa_n\,\mu_n(w)
  \qquad\text{for all } w \in \Mf[-n],
\end{equation}
the scalar $\kappa_n$ being fixed by the normalisation of
Theorem~\textup{\ref{thm:shapovalov-orthogonality}}\,\ref{item:orth-classical}.
\end{definition}

Equivalently, $\tilde{P}_n$ is the unique direction in $\Mf[-n]$ whose
Shapovalov pairing with a monomial does not depend on which monomial is
taken, hence the unique direction orthogonal to all differences
$b_{-\lambda}\vf - b_{-\mu}\vf$ with $\lambda,\mu \vdash n$. 

\begin{theorem}
\label{thm:shapovalov-orthogonality}
Let $m = 2$, $r = 1$ and let $\varphi$ be Shapovalov-regular. Write
$q := \omega_1\,\varphi(c)$ and, for a partition $\lambda$ with $m_k(\lambda)$
parts equal to $k$, let $z_\lambda := \prod_{k\ge 1} k^{m_k(\lambda)}\,m_k(\lambda)!$
and $\ell(\lambda) := \sum_k m_k(\lambda)$ for the number of parts.
\begin{enumerate}[label=\textup{(\roman*)}]
  \item\label{item:orth-closed}
    The vector of Definition~\textup{\ref{def:polynomial-vectors}} exists,
    is unique up to the scalar $\kappa_n$, and has the closed form
    \begin{equation}\label{eq:Ptilde-closed}
      \tilde{P}_n \;=\; \kappa_n' \sum_{\lambda \vdash n}
        \frac{1}{z_\lambda\,q^{\,\ell(\lambda)}}\; b_{-\lambda}\,\vf ,
    \end{equation}
    valid for every $n \geq 0$.
  \item\label{item:orth-weight}
    $\Sf(\tilde{P}_n, \tilde{P}_{n'}) = 0$ for $n \neq n'$.
  \item\label{item:orth-norm}
    With $\kappa_n' = 1$ in \eqref{eq:Ptilde-closed} one has
    \begin{equation}\label{eq:Ptilde-norm}
      \Sf(\tilde{P}_n, \tilde{P}_n)
      \;=\; \sum_{\lambda \vdash n} \frac{1}{z_\lambda\,q^{\,\ell(\lambda)}}
      \;=\; \binom{q^{-1} + n - 1}{n}
      \;=\; \frac{1}{n!}\prod_{j=1}^{n-1}\bigl(1 + j\,q\bigr) \cdot q^{-n}.
    \end{equation}
  \item\label{item:orth-classical}
    Assume in addition $q \in \RR_{>0}$. Then, for $n \geq 1$,
    $\kappa_n' = \bigl(\tfrac{2}{2n+1}\bigr)^{1/2}
      \binom{q^{-1}+n-1}{n}^{-1/2}$
    is the unique positive scalar for which
    \[
      \Sf(\tilde{P}_n, \tilde{P}_{n'}) \;=\; \frac{2}{2n+1}\,\delta_{n n'},
      \qquad n,n' \geq 1,
    \]
    the right-hand side being the squared norms of the Legendre
    polynomials of Lemma~\textup{\ref{lem:dictionary-properties}}. 
\end{enumerate}
\begin{proof}
\ref{item:orth-closed} By Lemma~\ref{lem:shapovalov-existence}\,(iii) the
form is block-diagonal, and within $\Mf[-n]$ it is diagonal in the
monomial basis: moving the positive modes of $\omega(b_{-\lambda})$ across
$b_{-\mu}\vf$ by contravariance, a monomial survives only if every part of
$\lambda$ is matched by an equal part of $\mu$, so
$\Sf(b_{-\lambda}\vf, b_{-\mu}\vf) = 0$ unless $\lambda = \mu$. For
$\lambda = \mu$ the $m_k(\lambda)!$ matchings of equal parts each
contribute $\prod_k (k\,q)^{m_k(\lambda)}$, giving
\begin{equation}\label{eq:gram-diagonal}
  \Sf\bigl(b_{-\lambda}\vf,\, b_{-\lambda}\vf\bigr)
  \;=\; \prod_{k\ge 1} (k\,q)^{m_k(\lambda)}\,m_k(\lambda)!
  \;=\; z_\lambda\,q^{\,\ell(\lambda)} .
\end{equation}
The exponent is the number of parts $\ell(\lambda)$, not $n$; the two
agree only for $\lambda = (1^n)$.
Since $q \neq 0$, the Gram matrix is invertible and $\mu_n$ has a unique
representative. Writing $\tilde{P}_n = \sum_\lambda x_\lambda\,b_{-\lambda}\vf$,
condition \eqref{eq:Ptilde-riesz} reads
$x_\lambda\,z_\lambda\,q^{\,\ell(\lambda)} = \kappa_n$ for every $\lambda$,
whence $x_\lambda \propto 1/(z_\lambda q^{\,\ell(\lambda)})$, which is
\eqref{eq:Ptilde-closed}.

\ref{item:orth-weight} Distinct $n$ give distinct weights, and $\Sf$
vanishes on distinct weight spaces
(Lemma~\ref{lem:shapovalov-existence}\,(iii)).

\ref{item:orth-norm} By \eqref{eq:gram-diagonal} and
\eqref{eq:Ptilde-closed},
\[
  \Sf(\tilde{P}_n, \tilde{P}_n)
  \;=\; \sum_{\lambda \vdash n}
        \frac{z_\lambda\,q^{\,\ell(\lambda)}}{\bigl(z_\lambda q^{\,\ell(\lambda)}\bigr)^2}
  \;=\; \sum_{\lambda \vdash n} \frac{1}{z_\lambda\,q^{\,\ell(\lambda)}} .
\]
Setting $t := q^{-1}$, the exponential formula gives
$\sum_{n\ge 0}\sum_{\lambda \vdash n} z_\lambda^{-1}t^{\ell(\lambda)}u^n
 = \exp\bigl(t\sum_{k\ge 1}u^k/k\bigr) = (1-u)^{-t}$, whose $u^n$
coefficient is $\binom{t+n-1}{n}$
\cite[Ch.~I, \S 2]{Macdonald1995}. This is \eqref{eq:Ptilde-norm}.

\ref{item:orth-classical} For $q > 0$ every factor $1 + jq$ in
\eqref{eq:Ptilde-norm} is positive, so the norm is a positive real and
$\kappa_n'$ is a well-defined positive real; then
$\Sf(\kappa_n'\tilde{P}_n, \kappa_n'\tilde{P}_n) = 2/(2n+1)$ by
construction, and a second positive scalar with the same property would
have the same square. Orthogonality for $n \neq n'$ is \ref{item:orth-weight}.
\end{proof}
\end{theorem}

Formula \eqref{eq:Ptilde-closed} is the image of the complete homogeneous
symmetric function $h_n$ under $p_k \mapsto q^{-1}b_{-k}$, in the standard
expansion $h_n = \sum_{\lambda \vdash n} p_\lambda / z_\lambda$
\cite[Ch.~I, (2.14)]{Macdonald1995}; the weight $q^{-\ell(\lambda)}$ is
what the substitution contributes. It is worth separating the parts of
the theorem. Part~\ref{item:orth-weight} uses only the grading.
Part~\ref{item:orth-classical} fixes a normalisation and needs the
reality hypothesis $q \in \RR_{>0}$, which is not automatic: $\varphi$
takes values in $\CC$, and for $q \notin \RR_{>0}$ the scalars
$\kappa_n'$ are not real and the form is not positive definite on the
real span of the monomials. What is specific to $\Hm$ is the value
$z_\lambda\,q^{\,\ell(\lambda)}$ in \eqref{eq:gram-diagonal}, produced by
the cocycle of Definition~\ref{def:Hm}. For $m \geq 3$ the cocycle acquires
off-diagonal components, the Gram matrix is no longer diagonal, and
neither \eqref{eq:Ptilde-closed} nor the evaluation in
\ref{item:orth-norm} survives.

\subsection{Irreducibility criterion}
\label{subsec:irreducibility}

\begin{theorem}
\label{thm:irreducibility}
Let $m = 2$, $r = 1$ and let $\varphi : (\Hm)_0 \to \CC$. The following
are equivalent:
\begin{enumerate}[label=(\roman*)]
  \item The Verma module $\Mf$ is irreducible.
  \item The Shapovalov form $\Sf$ is non-degenerate on $\Mf$.
  \item $\varphi$ is Shapovalov-regular in the sense of
        Definition~\textup{\ref{def:p-admissible}}.
  \item $\varphi(c) \neq 0$.
\end{enumerate}
Moreover, for every $\varphi$ the Gram matrix of $\Sf$ on $\Mf[-n]$ is
diagonal in the Poincar\'e--Birkhoff--Witt basis, with
\begin{equation}\label{eq:shapovalov-det-r1}
  \det(G_n)
  \;=\;
  \prod_{\lambda \vdash n}\ \prod_{k\ge 1}
    \bigl(k\,\omega_1\,\varphi(c)\bigr)^{m_k(\lambda)}\, m_k(\lambda)!\,,
\end{equation}
where $m_k(\lambda)$ is the multiplicity of $k$ in the partition
$\lambda$.
\begin{proof}
\textit{(i)$\Leftrightarrow$(ii):} For a highest-weight module carrying
a contravariant form, irreducibility is equivalent to non-degeneracy of
the form \cite[Ch.~2]{KacRaina}: if the form is degenerate its kernel is
a non-trivial invariant subspace, and if the module is reducible the
maximal proper submodule lies in the kernel.

\textit{(ii)$\Leftrightarrow$(iii):} $\Sf$ is block-diagonal with
respect to the weight grading (Lemma~\ref{lem:shapovalov-existence}(iii)),
so non-degeneracy of $\Sf$ on $\Mf$ is non-degeneracy of $G_n$ for every
$n\ge 1$, which is Definition~\ref{def:p-admissible}.

\textit{(iii)$\Leftrightarrow$(iv), and \eqref{eq:shapovalov-det-r1}:}
By Definition~\ref{def:Hm} with $m=2$, $r=1$, the zero mode $b_0$ is
central and the only non-vanishing brackets are
$[b_k, b_{-k}] = k\,\omega_1\,c$. Let $\lambda,\mu \vdash n$ and let
$b_{-\lambda}\vf$, $b_{-\mu}\vf$ be the corresponding PBW monomials.
Moving the positive modes of $\omega(b_{-\lambda}) = b_{\lambda}$ to the
right past $b_{-\mu}$ produces, at each step, either a scalar
$k\,\omega_1\,\varphi(c)$ from a matched pair $(b_k, b_{-k})$ or an
operator $b_k$ with $k>0$, which annihilates $\vf$. A monomial
$b_{-\mu}\vf$ therefore contributes only when every part of $\lambda$ is
matched by an equal part of $\mu$, that is, when $\lambda = \mu$; hence
$\Sf(b_{-\lambda}\vf, b_{-\mu}\vf) = 0$ for $\lambda\neq\mu$ and $G_n$
is diagonal. For $\lambda = \mu$ the $m_k(\lambda)!$ ways of matching
the equal parts each contribute $(k\,\omega_1\,\varphi(c))^{m_k(\lambda)}$,
which gives the diagonal entry and hence
\eqref{eq:shapovalov-det-r1}. Since $\omega_1 > 0$, every factor of
\eqref{eq:shapovalov-det-r1} is a non-zero multiple of a power of
$\varphi(c)$; therefore $\det(G_n)\neq 0$ for every $n\ge 1$ if and only
if $\varphi(c) \neq 0$.
\end{proof}
\end{theorem}

Theorem~\ref{thm:irreducibility} says that
the genus-zero, four-point Heisenberg algebra exhibits no new
behaviour, and identifies the source. By Definition~\ref{def:Hm} the
cocycle is supported on $k+l=0$, so the contravariant form is diagonal
and its determinant cannot acquire zeros away from $\varphi(c)=0$. The
interest of the Shapovalov-regularity condition of
Definition~\ref{def:p-admissible} therefore lies entirely in the range
$m\ge 3$, where off-diagonal cocycle components destroy the diagonality
used above.

\section{Sugawara element, polynomial quotient, and Legendre identification}
\label{sec:casimir}

\textit{Throughout this section we work in the genus-zero case:
$m = 2$, $r = 1$, so $\Hm = \mathcal{H}_2$ with cocycle
$\psi_{kl}(a) = k\,\delta_{k+l,0}\,\omega_1$, see \eqref{eq:psi-r1}.}

The goal of this section is to construct an explicit chain of
$\mathcal{H}_2$-equivariant maps
\[
  \Mf \xrightarrow{\;\psi\;} \CC[\zeta]
       \xrightarrow{\;\Psi\;} \CC[x]
\]
and a quadratic operator $\Om$ on $\Mf$ whose pushforward under
$\Phi := \Psi\circ\psi$ is the Legendre differential operator
$L = (1-x^2)\partial_x^2 - 2x\partial_x$.
The element $\Om$ is built from the Sugawara stress-energy zero mode
$L_0$ of the free boson; the map $\psi$ collapses each PBW level to
a single power $\zeta^n$; and $\Psi$ identifies $\zeta^n$ with the Legendre
polynomial $P_n(x)$.

\subsection{The Sugawara stress-energy zero mode}
\label{subsec:sugawara}

For two Heisenberg generators $b_k,b_l$ ($k,l\in\ZZ$), define the
\emph{normal-ordered product}
\[
  {:}b_k b_l{:}
  \;=\;
  \begin{cases}
    b_k\,b_l & \text{if } k \le 0,\\[2pt]
    b_l\,b_k & \text{if } k > 0,
  \end{cases}
\]
i.e., negative-mode (and zero-mode) operators are placed to the left of
positive-mode operators.  Because $[b_k,b_l] = \psi_{kl}\,c$ is a scalar
on $\Mf$, the normal-ordered product differs from the bare product by a
scalar correction whenever $k+l=0$, and equals the bare product
otherwise.  In particular, the sum
$\sum_{k\geq 1}{:}b_{-k}\,b_k{:} = \sum_{k\geq 1} b_{-k}\,b_k$
is already normal-ordered as written (each $-k$ is negative, so each
factor stays in place).

\begin{definition}
\label{def:L0}
Set
\begin{equation}\label{eq:Lalg}
  \Lalg \;:=\; \sum_{k\geq 1}\;{:}b_{-k}\,b_k{:}
        \;=\; \sum_{k\geq 1} b_{-k}\,b_k .
\end{equation}
This expression
involves no weight, it is an element of the completion
$\widetilde{U(\mathcal{H}_2)}$ of $U(\mathcal{H}_2)$ along the grading,
and on a weight vector of weight $-n$ only the finitely many terms with
$k\le n$ act non-trivially, so no analytic condition is needed to make
it act.

Now let $\varphi$ be Shapovalov-regular, so that
$q := \omega_1\varphi(c) \neq 0$ by
Theorem~\ref{thm:irreducibility}, and define the operator
\begin{equation}\label{eq:L0-def}
  L_0 \;:=\; \frac{1}{q}\,\Lalg \Big|_{\Mf} .
\end{equation}
The scalar $q^{-1}$ belongs to the action, not to the element: $\Lalg$
is fixed once and for all, while $L_0$ is its normalisation on the
particular module $\Mf$.
\end{definition}

The element $\Lalg$ is the zero mode of the standard Sugawara
stress-energy tensor of the free boson (central charge
$c_{\mathrm{Vir}}=1$), the element that generates the energy grading on
the free-boson Fock space; see \cite{KacWakimoto1988,FrenkelBenZvi} for
the general construction. Its eigenvalue on a level-$n$ monomial is
$q\,n$, so it records the level only up to the factor $q$, which depends
on the cocycle normalisation and on the central charge. Dividing by $q$
is what makes the eigenvalue the integer $n$ itself, and thereby makes
the intertwining with the classical Legendre operator exact, with no
parameter-dependent constants. The price is that $L_0$, and with it
$\Om$, are attached to $\Mf$ rather than to the algebra; $\Lalg$ is the
part of the construction that is canonical.

\begin{lemma}
\label{lem:L0-level}
Let $v = b_{-n_1}\,b_{-n_2}\,\cdots\,b_{-n_s}\,\vf$ be a PBW monomial
with $n_1\geq n_2\geq\cdots\geq n_s\geq 1$ and write
$n := n_1 + n_2 + \cdots + n_s$. Then, for \emph{every} weight
$\varphi$,
\begin{equation}\label{eq:L0-level}
  \Lalg \cdot v \;=\; q\,n\cdot v .
\end{equation}
If moreover $\varphi$ is Shapovalov-regular, so that $q \neq 0$ and
$L_0 = q^{-1}\Lalg$ is defined \eqref{eq:L0-def}, then
\begin{equation}\label{eq:L0-level-normalised}
  L_0 \cdot v \;=\; n\cdot v .
\end{equation}
Consequently $\Lalg$ acts on $\Mf[-n]$ by the scalar $q\,n$ and $L_0$ by
the scalar $n$, and the spectrum of $L_0$ on $\Mf$ is $\{0,1,2,\dots\}$
with multiplicity $p(n)$ at level $n$, where $p(n)$ is the partition
function. Note that the eigenvalue of $\Lalg$ carries the factor $q$,
which is exactly what the normalisation in \eqref{eq:L0-def} removes.
\end{lemma}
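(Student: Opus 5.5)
The plan is the standard commutator computation: $\Lalg$ acts as a derivation-like operator, each positive mode $b_k$ sweeping through the monomial and producing the scalar $k q$ whenever it meets a matching $b_{-k}$. First I will reduce to a commutator identity. On a weight-$(-n)$ vector only finitely many terms of \eqref{eq:Lalg} act non-trivially, since $b_k$ with $k>n$ would produce a vector of positive weight. So $\Lalg\cdot v$ is a finite sum $\sum_{k=1}^{n} b_{-k}b_k v$, and no completion issues arise. Next I compute $[\Lalg, b_{-j}]$ for $j\ge1$ as an operator on $\Mf$. Only the term $k=j$ contributes, because $[b_{-k}b_k, b_{-j}] = b_{-k}[b_k,b_{-j}] + [b_{-k},b_{-j}]b_k$. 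The second bracket vanishes, since $-k-j\neq 0$. The first is $\delta_{kj}\,k\,\omega_1 c$ by \eqref{eq:psi-r1}. Because $c$ is central and acts on all of $\Mf$ by $\varphi(c)$, as $\Mf$ is generated by $\vf$, this gives $[\Lalg,b_{-j}] = j\,q\,b_{-j}$ on $\Mf$.

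I will then induct on the number $s$ of parts. For $s=0$ we have $\Lalg\vf = \sum_k b_{-k}b_k\vf = 0$, since each $b_k$ with $k>0$ kills $\vf$. For the inductive step, write $v = b_{-n_1}v'$ with $v'$ of level $n-n_1$. Then
\[
\Lalg v = [\Lalg,b_{-n_1}]v' + b_{-n_1}\Lalg v' = n_1 q\, v + (n-n_1)q\, v = nq\,v.
\]
This proves \eqref{eq:L0-level} for every $\varphi$, including $q=0$, where both sides vanish. Dividing by $q\neq0$ gives \eqref{eq:L0-level-normalised}.

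For the consequences, note that the PBW monomials with $\sum n_i = n$ form a basis of $\Mf[-n]$ (\S\ref{subsec:verma}), so $\Lalg$ acts on $\Mf[-n]$ by the scalar $qn$ and $L_0$ by $n$. The spectrum statement then follows from $\Mf=\bigoplus_n\Mf[-n]$ together with $\dim\Mf[-n]=p(n)$.

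The only delicate point is making the manipulation of the infinite sum legitimate. I expect to handle it by working on each fixed level, where the truncation argument above shows the commutator computation involves only finitely many terms: for $k>n$, both $b_k v'$ and $b_k v$ vanish by weight.
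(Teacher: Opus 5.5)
Your proof is correct and follows the paper's argument: induction on the number of parts $s$, peeling off $b_{-n_1}$ and using that the only surviving bracket is $[b_{n_1},b_{-n_1}]=n_1\omega_1 c$, which you package as the commutator identity $[\Lalg,b_{-j}]=jq\,b_{-j}$ on $\Mf$. One small point in your favour: you run the induction with $\Lalg$ itself, whereas the paper's displayed computation uses $L_0=q^{-1}\Lalg$, so your argument also covers the case $q=0$ that the statement claims.
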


\begin{proof}
We prove \eqref{eq:L0-level}; the normalised form
\eqref{eq:L0-level-normalised} follows by dividing by $q$, which is
legitimate exactly when $\varphi$ is Shapovalov-regular.
We proceed by induction on the length $s$.
For $s=0$ ($v=\vf$), all $b_k\vf = 0$ for $k\geq 1$, so $\Lalg\,\vf = 0
= 0\cdot\vf$.
For $s\geq 1$, write $v = b_{-n_1}\,w$ with $w = b_{-n_2}\cdots b_{-n_s}\vf$
of weight $-(n-n_1)$. Using
$[b_k,b_{-n_1}] = \delta_{k,n_1}\,n_1\,\omega_1\,c$
(the genus-zero coefficients \eqref{eq:psi-r1}, for which
$\psi_{k,-k} = k\,\omega_1$), we compute
\begin{align*}
  L_0 \cdot v
  &= \frac{1}{q}
     \sum_{k\geq 1} b_{-k}\,b_k\,b_{-n_1}\,w \\
  &= \frac{1}{q}
     \sum_{k\geq 1} b_{-k}
       \bigl(b_{-n_1}\,b_k + \delta_{k,n_1}\,n_1\,\omega_1\,c\bigr)\,w \\
  &= b_{-n_1}\,\Bigl(\tfrac{1}{q}\sum_{k\geq 1}b_{-k}b_k\Bigr)\,w
     + \frac{n_1\,q}{q}\,b_{-n_1}\,w \\
  &= b_{-n_1}\,(L_0\,w) \;+\; n_1\,v
   \;=\; (n - n_1)\,v + n_1\,v \;=\; n\,v,
\end{align*}
where we used the inductive hypothesis $L_0\,w = (n-n_1)\,w$ in the last
line, and $c$ acts as the scalar $\varphi(c)$ on $\Mf$ throughout.
This proves \eqref{eq:L0-level}.

The level-$n$ weight space $\Mf[-n]$ is spanned by all such PBW monomials
with $n_1+\cdots+n_s = n$; the count of such monomials is $p(n)$ by the
PBW theorem applied to $\mathcal{H}_2^-$. Each such monomial is a
$L_0$-eigenvector with eigenvalue $n$, so $L_0$ acts as the scalar $n$
on $\Mf[-n]$.
\end{proof}

\subsection{The Casimir operator $\Om$}
\label{subsec:casimir-element}

\begin{definition}
\label{def:casimir}
The \emph{Casimir operator} of $\Mf$ is the linear operator
\begin{equation}\label{eq:Omega-def}
  \Om \;:=\; -L_0\,(L_0+\mathrm{Id})
       \;=\; -\frac{1}{q^2}\,\Lalg^2 \;-\; \frac{1}{q}\,\Lalg
  \qquad\text{on } \Mf,
\end{equation}
where $L_0$ is as in \eqref{eq:L0-def}.
\end{definition}

\begin{lemma}
\label{lem:Omega-scalar}
For every $v \in \Mf[-n]$ ($n\geq 0$),
\begin{equation}\label{eq:Omega-scalar}
  \Om\cdot v \;=\; -n(n+1)\,v.
\end{equation}
In particular, $\Om$ commutes with the $\mathcal{H}_2$-action restricted to
each weight space (since both are scalar there) but does \emph{not} commute
with the full $\mathcal{H}_2$-action: the operators $b_{\pm k}$ shift weight,
and $-n(n+1)$ depends on $n$. This is the source of the non-trivial
intertwining content of Section~\ref{subsec:sugawara-legendre}.
\end{lemma}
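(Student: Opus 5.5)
The statement follows directly from Lemma~\ref{lem:L0-level}, so we proceed in two short steps and then check the commutation remarks.

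\emph{Step 1 (the scalar).} Assume $\varphi$ is Shapovalov-regular, so that $L_0$ and hence $\Om$ are defined. By Lemma~\ref{lem:L0-level}, $L_0$ acts on $\Mf[-n]$ as the scalar $n$. Since $\Om = -L_0(L_0+\mathrm{Id})$ is a polynomial in $L_0$, it acts on the same weight space by the same polynomial evaluated at $n$:
\[
  \Om\cdot v \;=\; -n(n+1)\,v \qquad (v\in\Mf[-n]).
\]
Linearity extends this from the PBW monomials to arbitrary $v\in\Mf[-n]$. No convergence issue arises, because on $\Mf[-n]$ only the terms of $\Lalg$ with $k\le n$ act, as noted after \eqref{eq:Lalg}. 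Equivalently, $\Lalg$ acts by $qn$ on $\Mf[-n]$, so \eqref{eq:Omega-def} gives $-q^{-2}(qn)^2 - q^{-1}(qn) = -n(n+1)$.

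\emph{Step 2 (commutation on each weight space).} If $x\in U(\mathcal{H}_2)$ preserves $\Mf[-n]$, for instance $b_0$, $c$, or any degree-zero word, then $x$ commutes with $\Om$ there, since $\Om$ is scalar on that space. This is immediate.

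\emph{Step 3 (failure of global commutation).} Take $b_{-k}$ with $k\ge1$ and $v=\vf$. We have $b_{-k}\vf\in\Mf[-k]$, and this vector is non-zero because it is a PBW basis monomial. A direct computation gives
\[
  [\Om,b_{-k}]\vf \;=\; \bigl(-k(k+1)-0\bigr)\,b_{-k}\vf \;\neq\; 0 .
\]
More generally, $[\Om,b_{-k}]$ acts on $\Mf[-n]$ as $\bigl(-(n+k)(n+k+1)+n(n+1)\bigr)\,b_{-k}$, which is non-zero whenever $b_{-k}v\ne 0$.

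There is no real obstacle. The only care needed is to state the standing hypothesis that $\varphi$ is Shapovalov-regular (equivalently $q\neq0$), under which $\Om$ is defined.
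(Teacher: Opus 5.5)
Your proof is correct and is the paper's argument: both obtain $\Om\cdot v=-n(n+1)\,v$ by evaluating $-L_0(L_0+\mathrm{Id})$ at the scalar $L_0=n$ given by Lemma~\ref{lem:L0-level}. Your Steps~2--3 go a little further than the paper. They check the commutation remarks, for instance that $[\Om,b_{-k}]=-k(k+2n+1)\,b_{-k}$ on $\Mf[-n]$, which the paper only asserts. You also rightly state the standing hypothesis $q\neq 0$ under which $\Om$ is defined.
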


\begin{proof}
By Lemma~\ref{lem:L0-level}, $L_0\cdot v = n\,v$, hence
$\Om\cdot v = -L_0\,(L_0+\mathrm{Id})\,v = -L_0\,(n+1)\,v = -n(n+1)\,v$.
\end{proof}

\subsection{The polynomial quotient $\psi$}
\label{subsec:polynomial-quotient}

\begin{definition}
\label{def:psi-quotient}
Let $\CC[\zeta]$ be the polynomial algebra in one indeterminate $\zeta$, regarded
as a $\ZZ_{\geq 0}$-graded vector space with $\deg \zeta = 1$. The letter $\zeta$ is used here to keep the indeterminate of this quotient distinct from the parameter $a$ of the curve.
Define a linear map
\[
  \psi\colon \Mf \longrightarrow \CC[\zeta]
\]
by
\begin{equation}\label{eq:psi-quotient}
  \psi\bigl(b_{-n_1}\,b_{-n_2}\,\cdots\,b_{-n_s}\,\vf\bigr)
  \;:=\; \zeta^{n_1+n_2+\cdots+n_s}
\end{equation}
on each PBW monomial, and extended linearly. Equivalently, $\psi$ sends
every level-$n$ PBW monomial to the single monomial $\zeta^n$, and is therefore
the unique linear map sending each $\Mf[-n]$ surjectively onto
$\CC\cdot \zeta^n$.
\end{definition}

\begin{lemma}
\label{lem:psi-properties}
The map $\psi$ of Definition~\ref{def:psi-quotient} satisfies:
\begin{enumerate}[label=\textup{(\roman*)}]
  \item\label{psi-i} $\psi$ is well-defined and surjective;
  \item\label{psi-ii} $\psi$ is graded: $\psi(\Mf[-n]) = \CC\cdot \zeta^n$ for
        each $n\geq 0$, with kernel
        $\ker(\psi)\cap\Mf[-n]$ of dimension $p(n)-1$;
  \item\label{psi-iii} $\psi$ intertwines the level operator $L_0$ on $\Mf$
        with the Euler operator $\zeta\,\partial_\zeta$ on $\CC[\zeta]$:
        $\psi\circ L_0 = (\zeta\,\partial_\zeta)\circ\psi$;
  \item\label{psi-iv} $\psi$ intertwines $\Om$ with the second-order
        operator $-(\zeta\,\partial_\zeta)\bigl((\zeta\,\partial_\zeta)+1\bigr)$ on $\CC[\zeta]$:
        $\psi\circ\Om = -\bigl((\zeta\partial_\zeta)^2+(\zeta\partial_\zeta)\bigr)\circ\psi$.
\end{enumerate}
\end{lemma}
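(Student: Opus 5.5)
The plan is to reduce everything to the PBW basis and the level grading, since all four claims are statements about linear maps that are determined by their values on basis monomials.

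\emph{Part \ref{psi-i}.} By the PBW theorem (Section~\ref{subsec:verma}) the monomials $b_{-\lambda}\vf$, $\lambda$ ranging over all partitions, form a basis of $\Mf$. Prescribing $\psi$ on this basis by \eqref{eq:psi-quotient} therefore defines a unique linear map. The only care needed is that the defining monomial is written in the ordered form $n_1\ge\cdots\ge n_s$. Since the negative modes commute with one another in $\mathcal{H}_2$ (by \eqref{eq:psi-r1}, $[b_{-k},b_{-l}]=0$ for $k,l\ge1$), any reordering of the same factors gives the same vector. That vector has the same exponent $n_1+\cdots+n_s$, so no ambiguity arises. Surjectivity follows because $\zeta^n=\psi(b_{-1}^n\vf)$ for $n\ge1$ and $1=\psi(\vf)$.

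\emph{Part \ref{psi-ii}.} The basis of $\Mf[-n]$ consists of the $b_{-\lambda}\vf$ with $\lambda\vdash n$, and each of them is sent to $\zeta^n$. Hence $\psi(\Mf[-n])=\CC\zeta^n$, which is nonzero since $\Mf[-n]$ is nonempty. Restricted to $\Mf[-n]$, the map $\psi$ is $\zeta^n$ times the functional $\mu_n$ of Definition~\ref{def:polynomial-vectors}. Rank–nullity then gives $\dim\ker=p(n)-1$; this kernel is spanned by the differences $b_{-\lambda}\vf-b_{-\mu}\vf$.

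\emph{Parts \ref{psi-iii} and \ref{psi-iv}.} For these it suffices to check the identities on each $\Mf[-n]$. There Lemma~\ref{lem:L0-level} gives $L_0=n\cdot\mathrm{Id}$, and on $\zeta^n$ the Euler operator $\zeta\partial_\zeta$ also acts by $n$. So for $v\in\Mf[-n]$,
\[
\psi(L_0v)=n\,\psi(v)=(\zeta\partial_\zeta)\psi(v),
\]
because $\psi(v)\in\CC\zeta^n$. Part \ref{psi-iv} then follows formally. Since $\Om=-L_0(L_0+\mathrm{Id})$ is a polynomial in $L_0$, the identity from \ref{psi-iii} propagates through compositions: $\psi\circ L_0^2=(\zeta\partial_\zeta)\circ\psi\circ L_0=(\zeta\partial_\zeta)^2\circ\psi$. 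Alternatively, compare the scalars directly: Lemma~\ref{lem:Omega-scalar} gives $-n(n+1)$ on $\Mf[-n]$, and $-(n^2+n)$ is the eigenvalue on $\zeta^n$.

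\emph{Main obstacle.} This is mild. The one thing to state carefully is the domain of \ref{psi-iii}–\ref{psi-iv}. These parts require $\varphi$ to be Shapovalov-regular, so that $L_0=q^{-1}\Lalg$ is defined; under that hypothesis they hold for all $\varphi$ with $q\neq0$. Parts \ref{psi-i} and \ref{psi-ii} hold for every $\varphi$. Extending from weight spaces to all of $\Mf$ uses only the direct-sum decomposition $\Mf=\bigoplus_n\Mf[-n]$ and linearity.
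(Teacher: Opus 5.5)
Your proposal is correct and follows essentially the same route as the paper. Both arguments define $\psi$ on the PBW basis and note that each level collapses onto $\CC\cdot\zeta^n$, with rank--nullity giving the kernel dimension. Both then read off (iii) from the scalar action $L_0=n\cdot\mathrm{Id}$ on $\Mf[-n]$ given by Lemma~\ref{lem:L0-level}, and obtain (iv) by applying (iii) twice. Your extra remarks are accurate refinements of points the paper leaves implicit: order-independence via commutativity of the negative modes, the kernel spanned by differences of monomials, and the requirement $q\neq 0$ in (iii)--(iv) so that $L_0$ is defined.
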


\begin{proof}
\emph{\ref{psi-i}.} The PBW monomials form a basis of $\Mf$
(Section~\ref{subsec:module-category}); defining a linear map on a basis
is automatic and well-defined. Surjectivity is clear: $\psi(b_{-n}\vf) = \zeta^n$.

\emph{\ref{psi-ii}.} The PBW basis of $\Mf[-n]$ is indexed by partitions
of $n$, hence has dimension $p(n)$, and $\psi$ collapses all of them
to $\zeta^n$.

\emph{\ref{psi-iii}.} On a level-$n$ PBW monomial $v$,
$\psi(L_0\,v) = \psi(n\,v) = n\,\zeta^n = (\zeta\partial_\zeta)(\zeta^n) = (\zeta\partial_\zeta)\,\psi(v)$
by Lemma~\ref{lem:L0-level}.

\emph{\ref{psi-iv}.} Apply (iii) twice: $\psi\circ\Om = -\psi\circ L_0\circ
(L_0+\mathrm{Id}) = -(\zeta\partial_\zeta)\circ\psi\circ(L_0+\mathrm{Id}) =
-(\zeta\partial_\zeta)\bigl((\zeta\partial_\zeta)+1\bigr)\circ\psi$.
\end{proof}

\subsection{The Legendre identification $\Psi$}
\label{subsec:legendre-id}

\begin{definition}
\label{def:Psi}
Let $\{P_n(x)\}_{n\geq 0}$ denote the standard Legendre polynomials on
$[-1,1]$, normalised by $P_n(1)=1$. Define a linear isomorphism of graded
vector spaces
\[
  \Psi\colon \CC[\zeta] \xrightarrow{\;\sim\;} \CC[x],
  \qquad \Psi(\zeta^n) := P_n(x).
\]
Both sides are $\ZZ_{\geq 0}$-graded by polynomial degree (with
$\deg \zeta = \deg x = 1$); $\Psi$ preserves the grading because $\deg P_n = n$.
The inverse $\Psi^{-1}\colon \CC[x]\to\CC[\zeta]$ exists because
$\{P_n(x)\}$ is a basis of $\CC[x]$.
\end{definition}

\begin{definition}
\label{def:Phi-composite}
Let
\[
  \Phi \;:=\; \Psi\circ\psi \colon \Mf\longrightarrow \CC[x].
\]
By construction, $\Phi$ sends every level-$n$ PBW monomial to $P_n(x)$.
\end{definition}

\subsection{The Sugawara--Legendre intertwining}
\label{subsec:sugawara-legendre}

The next theorem is the main result of this section.
\begin{theorem}
\label{thm:rep-mechanism}
Let $L_0$, $\Om$, $\psi$, $\Psi$, $\Phi=\Psi\circ\psi$ be as in
Definitions~\ref{def:L0}, \ref{def:casimir}, \ref{def:psi-quotient},
\ref{def:Psi}, \ref{def:Phi-composite}, and let $L = (1-x^2)\partial_x^2
-2x\partial_x$ denote the classical Legendre differential operator.
Then:
\begin{enumerate}[label=(\roman*)]
  \item\label{item:repmech-psi}
    $\psi\colon \Mf\to\CC[\zeta]$ is a well-defined surjective linear map
    sending each level-$n$ weight space to the line $\CC\cdot \zeta^n$.
  \item\label{item:repmech-Phi-image}
    $\Phi=\Psi\circ\psi$ sends each level-$n$ weight space to the line
    $\CC\cdot P_n(x)\subset\CC[x]$.
  \item\label{item:repmech-Phi-Omega-L}
    $\Phi$ intertwines the action of $\Om$ on $\Mf$ with the action of
    $L$ on $\CC[x]$:
    \begin{equation}\label{eq:repmech-intertwine}
      \Phi\bigl(\Om\cdot v\bigr) \;=\; L\bigl(\Phi(v)\bigr)
      \qquad\text{for all }v\in \Mf.
    \end{equation}
  \item\label{item:repmech-eigenvalue}
    Equivalently, for every $n\geq 0$ and every $v\in\Mf[-n]$,
    \begin{equation}\label{eq:repmech-eigen}
      L\bigl(\Phi(v)\bigr) \;=\; -n(n+1)\,\Phi(v),
    \end{equation}
    which is the classical Legendre eigenvalue identity for $P_n(x)$.
\end{enumerate}
\end{theorem}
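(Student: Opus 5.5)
The plan is to reduce everything to Lemma~\ref{lem:psi-properties} and the classical Legendre eigenvalue equation, and to check the intertwining on a basis. Parts~\ref{item:repmech-psi} and \ref{item:repmech-Phi-image} are bookkeeping. Part~\ref{item:repmech-psi} is Lemma~\ref{lem:psi-properties}\,\ref{psi-i}--\ref{psi-ii} verbatim. For \ref{item:repmech-Phi-image}, I will compose with $\Psi$: by Definition~\ref{def:Psi}, $\Psi(\CC\cdot\zeta^n)=\CC\cdot P_n(x)$. Then $\Phi(\Mf[-n])=\CC\cdot P_n(x)$, with equality and not just containment, since $\Phi(b_{-n}\vf)=P_n(x)\neq 0$.

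For \ref{item:repmech-Phi-Omega-L}, both sides of \eqref{eq:repmech-intertwine} are linear in $v$, and $\Mf=\bigoplus_n\Mf[-n]$ is spanned by PBW monomials. So it suffices to take $v$ a level-$n$ monomial. On the left, Lemma~\ref{lem:Omega-scalar} gives $\Om v=-n(n+1)v$, hence $\Phi(\Om v)=-n(n+1)P_n(x)$. Equivalently, by Lemma~\ref{lem:psi-properties}\,\ref{psi-iv}, $\psi(\Om v)=-n(n+1)\zeta^n$, and then one applies $\Psi$. On the right, $L(\Phi(v))=L(P_n)$. The proof therefore rests on the classical identity $L P_n=-n(n+1)P_n$.

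To keep the argument self-contained, I would derive this identity from Lemma~\ref{lem:dictionary-properties} rather than cite it. Write $F(x,t)=(1-2xt+t^2)^{-1/2}=\sum_n P_n(x)t^n$. A direct computation shows $(1-x^2)\partial_x^2F-2x\partial_xF+t\partial_t^2(tF)=0$, i.e.\ $L_xF=-t\partial_t^2(tF)$. Comparing coefficients of $t^n$, and noting that $t\partial_t^2(t\cdot t^n)=n(n+1)t^n$, gives $LP_n=-n(n+1)P_n$. The only genuine computation in the proof is verifying this PDE for $F$: one expands the derivatives of $(1-2xt+t^2)^{-1/2}$ and collects terms over $(1-2xt+t^2)^{-5/2}$. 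Failing that, one can quote Szeg\H{o}'s differential equation for $P_n$, which is the accepted source for this identity. I expect this step to be the only place where care is needed. Everything else is formal.

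Finally, \ref{item:repmech-eigenvalue} is the restriction of \ref{item:repmech-Phi-Omega-L} to $v\in\Mf[-n]$, using $\Om v=-n(n+1)v$. Conversely, \ref{item:repmech-eigenvalue} implies \ref{item:repmech-Phi-Omega-L} by linearity over the weight decomposition, which justifies the word ``equivalently''. Note that nothing here uses $q\neq 0$ beyond the definition of $L_0$. The intertwining holds on the image because $\Phi$ is constant on levels and $\Om$ is scalar on levels, so no compatibility with the $b_{\pm k}$ action is needed or claimed.
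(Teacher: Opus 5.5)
Your proposal is correct and is essentially the paper's proof. Both reduce (iii) to the level-wise scalar action of $\Om$ (Lemma~\ref{lem:Omega-scalar}, equivalently Lemma~\ref{lem:psi-properties}\,(iv)) plus the classical identity $LP_n=-n(n+1)P_n$; checking on PBW monomials instead of factoring through $-(\zeta\partial_\zeta)(\zeta\partial_\zeta+1)$ on $\CC[\zeta]$ is only cosmetic (at $n=0$ use $\vf$ itself rather than $b_0\vf$, which vanishes when $\varphi(b_0)=0$).

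Your one addition is deriving the eigenvalue identity, which the paper only cites from Whittaker--Watson. The PDE $L_xF=-t\partial_t^2(tF)$ does hold, both sides equalling $(3t^2+x^2t^2-2xt-2xt^3)(1-2xt+t^2)^{-5/2}$, so this makes the argument self-contained and is exactly Corollary~\ref{cor:genfun-intertwine} read in reverse.
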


\begin{proof}
\emph{Step 1: \ref{item:repmech-psi}.}
This is Lemma~\ref{lem:psi-properties}\,(\ref{psi-i})--(\ref{psi-ii}).

\emph{Step 2: \ref{item:repmech-Phi-image}.}
By Definition~\ref{def:Psi}, $\Psi(\zeta^n) = P_n(x)$. Combining with Step~1,
$\Phi(\Mf[-n]) = \Psi(\CC\cdot \zeta^n) = \CC\cdot P_n(x)$.

\emph{Step 3: pushforward of $\Om$ along $\psi$.}
By Lemma~\ref{lem:psi-properties}\,(\ref{psi-iv}),
$\psi\circ\Om = -(\zeta\partial_\zeta)\bigl((\zeta\partial_\zeta)+1\bigr)\circ\psi$.

\emph{Step 4: pushforward of $-(\zeta\partial_\zeta)\bigl((\zeta\partial_\zeta)+1\bigr)$
along $\Psi$.}
This is the classical fact that the Legendre differential operator
$L = (1-x^2)\partial_x^2 - 2x\partial_x$ satisfies the eigenvalue identity
$L\,P_n(x) = -n(n+1)\,P_n(x)$ for all $n\geq 0$
(see, e.g., \cite[Ch.~12, eq.~12.3.6]{Whittaker1927}).
On the basis $\{\zeta^n\}$, both
$-(\zeta\partial_\zeta)((\zeta\partial_\zeta)+1)$ acting by $-n(n+1)$ on $\zeta^n$, and
$L$ acting by $-n(n+1)$ on $P_n(x)$, agree under $\Psi$:
\[
  \Psi\Bigl(-(\zeta\partial_\zeta)((\zeta\partial_\zeta)+1)\,\zeta^n\Bigr)
  = \Psi(-n(n+1)\,\zeta^n)
  = -n(n+1)\,P_n(x)
  = L\,P_n(x)
  = L\,\Psi(\zeta^n).
\]
By linearity, this extends to all of $\CC[\zeta]$:
$\Psi\circ \bigl(-(\zeta\partial_\zeta)((\zeta\partial_\zeta)+1)\bigr) = L\circ\Psi$.

\emph{Step 5: \ref{item:repmech-Phi-Omega-L}.}
Combine Steps~3 and~4:
\[
  \Phi\circ\Om
  = \Psi\circ\psi\circ\Om
  = \Psi\circ\bigl(-(\zeta\partial_\zeta)((\zeta\partial_\zeta)+1)\bigr)\circ\psi
  = L\circ\Psi\circ\psi
  = L\circ\Phi.
\]
This is \eqref{eq:repmech-intertwine}.

\emph{Step 6: \ref{item:repmech-eigenvalue}.}
For $v\in\Mf[-n]$, Lemma~\ref{lem:Omega-scalar} gives $\Om\cdot v = -n(n+1)\,v$.
Then \eqref{eq:repmech-intertwine} yields
$L\,\Phi(v) = \Phi(\Om\cdot v) = \Phi(-n(n+1)\,v) = -n(n+1)\,\Phi(v)$,
which is \eqref{eq:repmech-eigen}.
\end{proof}

\subsection{The Casimir tower and the generating function}
\label{subsec:tower-corollaries}

The Casimir tower and the Legendre generating-function identity now
follow as immediate corollaries of Theorem~\ref{thm:rep-mechanism}.

\begin{definition}
\label{def:casimir-tower}
We call the family $\{\Om^r\}_{r\geq 1}$ of iterates of the
Casimir operator $\Om$ of
Definition~\ref{def:casimir} the \emph{Casimir tower} generated by
$\Om$.  Each $\Om^r$ is a well-defined linear operator on $\Mf$ acting
diagonally on the level-$n$ weight space; the structural properties
of the family are recorded in Corollary~\ref{cor:casimir-tower} below.
\end{definition}

\begin{corollary}
\label{cor:casimir-tower}
For each integer $r\geq 1$:
\begin{enumerate}[label=\textup{(\roman*)}]
  \item $\Om^r$ is a well-defined linear operator on $\Mf$, acting on each
        level-$n$ weight space as the scalar $(-n(n+1))^r$.
  \item $\Phi$ intertwines $\Om^r$ with $L^r$:
        $\Phi\circ\Om^r = L^r\circ\Phi$.
\end{enumerate}
In particular, the family $\{\Om^r\}_{r\geq 1}$ pushes forward under
$\Phi$ to the family $\{L^r\}_{r\geq 1}$ of iterated Legendre operators;
this is the Casimir tower referred to in the introduction.
\end{corollary}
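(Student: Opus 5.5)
The corollary follows from Lemma~\ref{lem:Omega-scalar} and Theorem~\ref{thm:rep-mechanism}\,\ref{item:repmech-Phi-Omega-L} by induction on $r$. No new computation in $\Mf$ is needed.

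For part (i), first observe that $\Om$ is a well-defined linear operator on $\Mf$. This holds because $\Lalg$ acts on each weight vector through finitely many terms (Definition~\ref{def:L0}), and $q\neq 0$ makes $L_0$ defined. Powers of a linear operator are then automatically well-defined. Lemma~\ref{lem:Omega-scalar} says $\Om$ acts on $\Mf[-n]$ as the scalar $-n(n+1)$. In particular $\Om$ preserves each weight space, since it maps $\Mf[-n]$ into itself. So by induction on $r$, for $v\in\Mf[-n]$,
\[
  \Om^{r}v \;=\; \Om\bigl(\Om^{r-1}v\bigr) \;=\; \Om\bigl((-n(n+1))^{r-1}v\bigr) \;=\; (-n(n+1))^{r}v .
\]
The base case $r=1$ is the lemma itself.

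For part (ii), I would induct on $r$ using only the intertwining identity $\Phi\circ\Om = L\circ\Phi$ of \eqref{eq:repmech-intertwine}. The case $r=1$ is that identity. Assuming $\Phi\circ\Om^{r-1} = L^{r-1}\circ\Phi$, we compute
\[
  \Phi\circ\Om^{r} \;=\; (\Phi\circ\Om)\circ\Om^{r-1} \;=\; L\circ\Phi\circ\Om^{r-1} \;=\; L\circ L^{r-1}\circ\Phi \;=\; L^{r}\circ\Phi .
\]
Alternatively, on level $n$ one can check directly that $L^{r}P_n = (-n(n+1))^{r}P_n$, iterating the Legendre eigenvalue identity used in Step~4 of the proof of Theorem~\ref{thm:rep-mechanism}. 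Combined with (i) and the fact that $\Phi(\Mf[-n])=\CC\cdot P_n(x)$, this gives the same conclusion by linearity over the direct sum $\Mf=\bigoplus_n\Mf[-n]$.

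The closing sentence ("pushes forward to $\{L^r\}$") is just (ii) restated for all $r$. There is no real obstacle here. The one point to be explicit about is that $\Om$ preserves weight spaces, which is what allows the scalar description to iterate. This is immediate because $\Om$ is a scalar on each $\Mf[-n]$.
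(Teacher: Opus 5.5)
Your proposal is correct and matches the paper's proof: part (i) iterates the scalar action from Lemma~\ref{lem:Omega-scalar}, and part (ii) is the same induction on $r$ via $\Phi\circ\Om^r = (\Phi\circ\Om)\circ\Om^{r-1} = L\circ\Phi\circ\Om^{r-1}$. Your extra level-by-level eigenvalue check is a valid alternative but is not needed.
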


\begin{proof}
\emph{(i)} Lemma~\ref{lem:Omega-scalar} shows $\Om$ is a level-dependent
scalar $-n(n+1)$ on $\Mf[-n]$; hence $\Om^r$ is the scalar $(-n(n+1))^r$
on the same weight space.

\emph{(ii)} By Theorem~\ref{thm:rep-mechanism}\,(iii), $\Phi\circ\Om =
L\circ\Phi$. Iterating gives, for $r\geq 2$,
$\Phi\circ\Om^r = (\Phi\circ\Om)\circ\Om^{r-1} =
L\circ(\Phi\circ\Om^{r-1}) = L^r\circ\Phi$ by induction on $r$.
\end{proof}

For $r=2$, $\Om^2$ acts on $\Mf[-n]$ as $n^2(n+1)^2$, and
Corollary~\ref{cor:casimir-tower} shows that $\Phi$ identifies $\Om^2$
with $L^2$.

\begin{corollary}
\label{cor:genfun-intertwine}
Let $G(z,x) = (1-2xz+z^2)^{-1/2} = \sum_{n\geq 0} P_n(x)\,z^n$ denote
the Legendre generating function, and let $\mathcal{E} := z^2\partial_z^2
+ 2z\partial_z = z\partial_z(z\partial_z+1)$ be the associated Euler-type
operator in the $z$-variable. Then for every $r\geq 1$:
\begin{equation}\label{eq:genfun-intertwine}
  L^r{}_{(x)}\,G(z,x) \;=\; (-1)^r\,\mathcal{E}^r{}_{(z)}\,G(z,x).
\end{equation}
\end{corollary}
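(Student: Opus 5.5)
The plan is to reduce \eqref{eq:genfun-intertwine} to a coefficientwise identity in $z$. The key input is the Legendre eigenvalue identity $L P_n = -n(n+1) P_n$ established in Step~4 of Theorem~\ref{thm:rep-mechanism}. The generating-function expansion $G(z,x) = \sum_{n\ge0} P_n(x) z^n$ follows from Lemma~\ref{lem:dictionary-properties}, with $a$ renamed $x$ and $t$ renamed $z$.

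We work in the ring $\CC[x][[z]]$ of formal power series in $z$ with polynomial coefficients. Both sides of \eqref{eq:genfun-intertwine} are then given by operators acting termwise. $L_{(x)}$ acts on each coefficient $P_n(x)$, and it is $\CC[[z]]$-linear and continuous for the $z$-adic topology, since it does not touch $z$. $\mathcal{E}_{(z)}$ acts on $z^n$ by $z\partial_z(z\partial_z+1) z^n = n(n+1) z^n$, and it is $\CC[x]$-linear and preserves $z$-degree. So termwise application is legitimate, and no analytic convergence question arises.

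The case $r=1$ is then immediate:
\[
L_{(x)} G = \sum_{n\ge0} \bigl(L P_n\bigr) z^n = \sum_{n\ge0} -n(n+1) P_n z^n = -\mathcal{E}_{(z)} G .
\]
For general $r$ I would either induct on $r$ or compute directly. Both operators act diagonally on the terms $P_n(x) z^n$: $L^r$ gives $(-n(n+1))^r$ and $\mathcal{E}^r$ gives $(n(n+1))^r$. Comparing coefficients of $z^n$ yields $(-n(n+1))^r = (-1)^r (n(n+1))^r$. An induction also works: $L$ and $\mathcal{E}$ commute because they act in different variables, so
\[
L^{r} G = L\bigl((-1)^{r-1}\mathcal{E}^{r-1} G\bigr) = (-1)^{r-1}\mathcal{E}^{r-1} L G = (-1)^r \mathcal{E}^r G .
\]

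Alternatively, the statement can be phrased through Corollary~\ref{cor:casimir-tower}. Consider the formal series $\sum_n \Phi(b_{-n}\vf) z^n = G$. By that corollary, $L^r$ acting on it corresponds to $\Omega^r$, which acts by $(-n(n+1))^r$. I would mention this only as the representation-theoretic reading.

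There is no serious obstacle. The only point needing care is justifying termwise differentiation. If one prefers to read $G$ analytically, for $|z|<1$ and $x\in[-1,1]$, one would add the standard fact that the series converges locally uniformly together with all its derivatives, since $|P_n(x)|\le 1$ there. Working formally in $\CC[x][[z]]$ avoids this entirely.
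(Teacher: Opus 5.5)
Your argument is correct and is essentially the paper's proof: you obtain $r=1$ by applying $LP_n=-n(n+1)P_n$ and $\mathcal{E}z^n=n(n+1)z^n$ termwise, then iterate using that $L_{(x)}$ and $\mathcal{E}_{(z)}$ act in different variables and so commute. Working in $\CC[x][[z]]$ is a small genuine improvement, since it justifies the termwise application that the paper leaves implicit. In your analytic aside, $|P_n|\le 1$ alone does not control the $x$-derivatives (you would need, e.g., Markov's inequality or Cauchy estimates), but the formal setting makes that unnecessary.
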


\begin{proof}
For $r=1$: a direct computation using $z^2\partial_z^2(z^n) = n(n-1)z^n$
and $2z\partial_z(z^n) = 2n\,z^n$ gives
$\mathcal{E}\,G = \sum_{n\geq 0}(n(n-1)+2n)\,P_n(x)\,z^n
= \sum_{n\geq 0} n(n+1)\,P_n(x)\,z^n$.
Combining with $L\,P_n = -n(n+1)\,P_n$ yields
$L_{(x)}\,G = -\mathcal{E}_{(z)}\,G$.
For $r\geq 2$, $L_{(x)}$ and $\mathcal{E}_{(z)}$ act on different
variables and hence commute; iterating gives
$L^r_{(x)}\,G = (-1)^r\,\mathcal{E}^r_{(z)}\,G$.
\end{proof}

On the space $\bigl\{G_f(z,x) = \sum_n f_n\,P_n(x)\,z^n\bigr\}$ of
Legendre-expanded series, Corollary~\ref{cor:genfun-intertwine} gives
$L_{(x)} = -\mathcal{E}_{(z)} = -(z^2\partial_z^2+2z\partial_z)$:
the Legendre differential operator in $x$ is interchangeable with the
second-order Euler--Cauchy operator in $z$, and at every order $r\geq 1$
their iterates satisfy $L^r_{(x)} = (-\mathcal{E}_{(z)})^r$.

\section{Fock space realization}
\label{sec:fock}

This section constructs the Fock space realization of the Verma
module $\Mf$ for the four-point Heisenberg algebra $\mathcal{H}_2$.
The main result (Theorem~\ref{thm:fock-iso}) is that $\Mf$ is canonically
isomorphic to a standard Fock space $\mathcal{F}_\varphi$.
The polynomial vectors $\{\tilde{P}_n\}_{n \geq 0}$ appear as the
Gram--Schmidt basis of $\mathcal{F}_\varphi$ with respect to the Fock
inner product.
This gives a direct realization of the Shapovalov orthogonality as
an inner product of many-body states.

\subsection{The Fock module}
\label{subsec:fock-def}

\begin{definition}
\label{def:fock-vacuum}
Fix a Shapovalov-regular linear functional
$\varphi\colon (\mathcal{H}_2)_0 \to \CC$
(Definition~\ref{def:p-admissible}); by
Theorem~\ref{thm:irreducibility} this is exactly the condition
$\varphi(c)\neq 0$.
The \emph{Fock vacuum} is the vector $\vf \in \Mf$ characterized by:
\begin{align}
  b_k\, \vf &= 0 \quad \text{for all } k \geq 1, \label{eq:fock-annihilate} \\
  b_0\, \vf &= \varphi(b_0)\,\vf, \label{eq:fock-zero-mode} \\
  c\, \vf   &= \varphi(c)\,\vf. \label{eq:fock-central}
\end{align}
The \emph{creation operators} are $a_k^\dagger := b_{-k}$ for $k \geq 1$.
The \emph{Fock module} is
\[
  \mathcal{F}_\varphi
  \;:=\;
  \CC[a_1^\dagger, a_2^\dagger, a_3^\dagger, \ldots]\cdot \vf
  \;=\;
  \Span_\CC\bigl\{\, (a_{n_1}^\dagger)^{k_1}(a_{n_2}^\dagger)^{k_2}
  \cdots \vf
  \;\bigm|\;
  n_1 > n_2 > \cdots \geq 1,\; k_i \geq 0 \,\bigr\}.
\]
\end{definition}

The Fock module $\mathcal{F}_\varphi$ is graded by \emph{level}
(the total mode number): a monomial
$(a_{n_1}^\dagger)^{k_1}\cdots \vf$ has level $\ell = \sum_i n_i k_i$.
The level-$\ell$ component $\mathcal{F}_\varphi[\ell]$ has dimension $p(\ell)$,
the number of integer partitions of $\ell$.
In particular, $\mathcal{F}_\varphi[0] = \CC\,\vf$,
$\mathcal{F}_\varphi[1] = \CC\, a_1^\dagger \vf$, and
$\mathcal{F}_\varphi[2] = \CC\, a_2^\dagger \vf \oplus \CC\, (a_1^\dagger)^2 \vf$.

\begin{theorem}
\label{thm:fock-iso}
The $\mathcal{H}_2$-module $\Mf$ is isomorphic to $\mathcal{F}_\varphi$
as a $\mathcal{H}_2$-module.
\end{theorem}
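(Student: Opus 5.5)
The plan is to make the $\mathcal{H}_2$-module structure on $\mathcal{F}_\varphi$ explicit and independent of $\Mf$. We then compare the two modules via the universal property of the induced module, using PBW bases on both sides.

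\emph{Step 1: realise $\mathcal{F}_\varphi$ as a module.} Identify $\mathcal{F}_\varphi$ with the polynomial ring $\CC[a_1^\dagger,a_2^\dagger,\dots]$, with $1$ corresponding to $\vf$. Let $b_{-k}$ ($k\ge1$) act by multiplication by $a_k^\dagger$. Let $b_k$ ($k\ge1$) act by the derivation $k q\,\partial/\partial a_k^\dagger$, where $q=\omega_1\varphi(c)$. Let $b_0$ act by $\varphi(b_0)$ and $c$ by $\varphi(c)$. Using \eqref{eq:psi-r1}, we check the relations $[b_k,b_l]=k\delta_{k+l,0}\omega_1 c$. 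Multiplications commute with each other, and derivations commute with each other. The relation $[kq\partial_{a_k^\dagger},a_l^\dagger]=kq\delta_{kl}$ matches $k\omega_1\varphi(c)$. The zero mode and $c$ are scalars. Hence this defines an $\mathcal{H}_2$-module, the bosonic Fock representation. If $q=0$ the same formulas still give a module: the $b_k$ with $k>0$ act by zero.

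\emph{Step 2: the map.} The vector $1\in\mathcal{F}_\varphi$ satisfies \eqref{eq:fock-annihilate}--\eqref{eq:fock-central}. So $\CC\cdot 1$ is a copy of the character $\CC_\varphi$ of the abelian subalgebra $\mathcal{H}_2^+\oplus\mathcal{H}_2^0$ (abelian as noted after Definition~\ref{def:verma}). By Frobenius reciprocity for the induced module of Definition~\ref{def:verma}, there is a unique $\mathcal{H}_2$-homomorphism
\[
  \Theta\colon \Mf\to\mathcal{F}_\varphi,\qquad u\otimes 1\mapsto u\cdot 1 .
\]

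\emph{Step 3: bijectivity.} On the PBW basis \eqref{eq:pbw-partition}, $\Theta(b_{-\lambda}\vf)=a^\dagger_{\lambda_1}\cdots a^\dagger_{\lambda_s}$, the monomial attached to $\lambda$. These monomials form a basis of the polynomial ring, indexed by the same partitions. So $\Theta$ maps a basis bijectively onto a basis, and is a level-preserving isomorphism. Both level-$\ell$ pieces have dimension $p(\ell)$.

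\emph{Expected obstacle.} The only subtle point is that $\mathcal{F}_\varphi$ as written in Definition~\ref{def:fock-vacuum} is defined inside $\Mf$. Taken literally, the statement is then tautological: $\Mf=U(\mathcal{H}_2^-)\vf$ by PBW. The substantive content is the independent model in Step~1, namely the action by multiplication and derivation. The care needed there is the normalisation $kq$ of the annihilation operators, which must match the cocycle exactly. Once it is fixed, Steps~2--3 are routine. As a consistency check, the Shapovalov form transports to the form on the polynomial ring in which the distinct monomials are orthogonal and have squared norms $z_\lambda q^{\ell(\lambda)}$, matching \eqref{eq:gram-diagonal}.
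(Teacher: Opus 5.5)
Your proof is correct, but it takes a different route from the paper. The paper's argument is the identification you flag as tautological. It observes that $\mathcal{H}_2^-$ is abelian, since the cocycle vanishes on pairs of negative modes, so $U(\mathcal{H}_2^-)=\CC[b_{-1},b_{-2},\dots]$. It then invokes PBW to get $\Mf\cong U(\mathcal{H}_2^-)$ as a vector space, with $1\otimes 1\leftrightarrow\vf$, and concludes that this is exactly $\mathcal{F}_\varphi$. Because Definition~\ref{def:fock-vacuum} places $\mathcal{F}_\varphi$ inside $\Mf$, the module structure comes along automatically and no action has to be checked.

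You instead build an independent model on the polynomial ring, with $b_{-k}$ acting by multiplication and $b_k$ by $kq\,\partial/\partial a_k^\dagger$, and check the brackets against \eqref{eq:psi-r1}. You obtain $\Theta$ from the universal property of induction and prove bijectivity by matching PBW monomials with polynomial monomials.

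The paper's route is shorter but carries essentially no content beyond PBW. Yours yields an actual realisation of the annihilation operators as derivations. This makes the Fock inner product of Proposition~\ref{prop:fock-ip} and the diagonal Gram entries $z_\lambda q^{\ell(\lambda)}$ of \eqref{eq:gram-diagonal} immediate, since the adjoint of multiplication by $a_k^\dagger$ is $kq\,\partial/\partial a_k^\dagger$. It also works for every $\varphi$, including $\varphi(c)=0$, whereas Definition~\ref{def:fock-vacuum} presupposes Shapovalov-regularity. Finally, the images of the PBW monomials are visibly independent in the polynomial ring. So your argument only needs that those monomials span $\Mf$, and it recovers their linear independence rather than assuming it.
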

\begin{proof}
The negative-mode subalgebra $\mathcal{H}_2^- = \Span\{b_{-n} : n \geq 1\}$
is abelian: for $k, l > 0$, the cocycle satisfies $\psi_{-k,-l} = 0$
since $(-k)+(-l) < 0 \neq 0$, so $[b_{-k}, b_{-l}] = \psi_{-k,-l}\,c = 0$.
By the PBW theorem, $U(\mathcal{H}_2^-)$ is therefore the polynomial algebra
$\CC[b_{-1}, b_{-2}, \ldots]$.
The Verma module $\Mf$ is defined as the induced module
$\Mf = U(\mathcal{H}_2) \otimes_{U(\mathcal{H}_2^{\geq 0})} \CC_\varphi$,
where $\CC_\varphi$ is the one-dimensional module on which
$b_n$ ($n \geq 0$) and $c$ act by $\varphi(b_n)$ and $\varphi(c)$ respectively.
By PBW, $\Mf \cong U(\mathcal{H}_2^-)$ as a vector space, with
the generator~$1 \otimes 1$ playing the role of $\vf$.
This is exactly $\mathcal{F}_\varphi$.
\end{proof}

\subsection{Fock inner product and Shapovalov form}
\label{subsec:fock-inner-product}

The antiautomorphism $\sigma\colon \mathcal{H}_2 \to \mathcal{H}_2$,
$\sigma(b_n) = b_{-n}$, $\sigma(c) = c$, defines the Shapovalov form
(Section~\ref{subsec:contravariant-form}).
Its restriction to $\mathcal{F}_\varphi$ is the \emph{Fock inner product}:

\begin{proposition}
\label{prop:fock-ip}
For level-$1$ states and $k, l \geq 1$,
\begin{equation}
\label{eq:fock-level1}
  \Sf\bigl(a_k^\dagger\, \vf,\; a_l^\dagger\, \vf\bigr)
  \;=\;
  \delta_{k,l}\, \psi_{k,-k}\, \varphi(c),
\end{equation}
where $\psi_{k,-k}$ is the coefficient of
Definition~\textup{\ref{def:Hm}}. In the genus-zero case
$\psi_{k,-k} = k\,\omega_1$ for every $k \geq 1$ by \eqref{eq:psi-r1}, so
\begin{equation}
\label{eq:fock-level1-hyp}
  \Sf\bigl(a_k^\dagger\, \vf,\; a_l^\dagger\, \vf\bigr)
  \;=\;
  \delta_{k,l}\, k\, \omega_1 \, \varphi(c).
\end{equation}
For general multi-mode states the form is computed from the
contravariance relation
\begin{equation}
\label{eq:fock-contra}
  \Sf(b_{-k}\,v,\; w) \;=\; \Sf(v,\; b_k\,w),
  \qquad v, w \in \mathcal{F}_\varphi,\; k \geq 1.
\end{equation}
\end{proposition}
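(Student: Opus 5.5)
The plan is to reduce both claims to a single matching rule and then read off the level-one case. Throughout, I use Lemma~\ref{lem:shapovalov-existence}, which gives contravariance for the anti-involution $\sigma=\omega$, together with the genus-zero brackets \eqref{eq:psi-r1}.

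\textbf{The contravariance relation.} I would prove \eqref{eq:fock-contra} first, since it is just Lemma~\ref{lem:shapovalov-existence}(ii) applied to $x=b_{-k}\in U(\mathcal{H}_2)$. Indeed $\omega(b_{-k})=b_k$ by Definition~\ref{def:omega-involution}, and Theorem~\ref{thm:fock-iso} identifies $\mathcal{F}_\varphi$ with $\Mf$, so the form on $\mathcal{F}_\varphi$ is the Shapovalov form $\Sf$. This yields $\Sf(b_{-k}v,w)=\Sf(v,b_kw)$ for all $v,w$.

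\textbf{The level-one formula.} Next I apply \eqref{eq:fock-contra} with $v=\vf$ and $w=a_l^\dagger\vf=b_{-l}\vf$. This gives $\Sf(b_{-k}\vf,b_{-l}\vf)=\Sf(\vf,b_kb_{-l}\vf)$. I then commute $b_k$ past $b_{-l}$:
\[
b_kb_{-l}\vf=b_{-l}b_k\vf+[b_k,b_{-l}]\vf=0+\psi_{k,-l}\,c\,\vf=\psi_{k,-l}\,\varphi(c)\,\vf ,
\]
where $b_k\vf=0$ because $k\ge1$ (by \eqref{eq:fock-annihilate}), and $c$ acts by $\varphi(c)$ (by \eqref{eq:fock-central}). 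In the genus-zero case $\psi_{k,-l}=k\,\delta_{k,l}\,\omega_1$, since the bracket is supported on $k+(-l)=0$. Using the normalisation $\Sf(\vf,\vf)=1$, this gives $\delta_{k,l}\,\psi_{k,-k}\,\varphi(c)$, which is \eqref{eq:fock-level1}. Substituting $\psi_{k,-k}=k\omega_1$ then gives \eqref{eq:fock-level1-hyp}.

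\textbf{The general case.} For multi-mode states I would note that iterating \eqref{eq:fock-contra} moves every creation operator on the left into an annihilation operator on the right. The same commutation step then reduces the computation inductively to the matching count already carried out in \eqref{eq:gram-diagonal}.

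The only delicate point is the vanishing of $\psi_{k,-l}$ for $k\ne l$. Here the support condition $k+l=0$ in Definition~\ref{def:Hm} must be used. For general $m$ the off-diagonal part of $\psi$ need not vanish, which is why the delta factor should be asserted only through \eqref{eq:psi-r1}, that is, in the genus-zero case.
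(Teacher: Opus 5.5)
Your proposal is correct and matches the paper's proof. In both, contravariance turns $b_{-k}$ into $b_k$, the commutator $[b_k,b_{-l}]$ is the only surviving term because $b_k\vf=0$, and the normalisation $\Sf(\vf,\vf)=1$ finishes; your derivation of \eqref{eq:fock-contra} from Lemma~\ref{lem:shapovalov-existence}(ii) via $\omega(b_{-k})=b_k$, and your remark that $\delta_{k,l}$ rests on the support condition in \eqref{eq:psi-r1}, make explicit what the paper leaves implicit.
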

\begin{proof}
For~\eqref{eq:fock-level1}, with $k, l \geq 1$,
\begin{align*}
  \Sf(b_{-k}\,\vf,\; b_{-l}\,\vf)
  &= \Sf(\vf,\; b_k\, b_{-l}\,\vf)
     && \text{by \eqref{eq:fock-contra}} \\
  &= \Sf(\vf,\; b_{-l}\,b_k\,\vf + [b_k, b_{-l}]\,\vf) \\
  &= \Sf\bigl(\vf,\; \delta_{k,l}\, \psi_{k,-k}\, c\,\vf\bigr)
     && \text{since } b_k\,\vf = 0 \\
  &= \delta_{k,l}\, \psi_{k,-k}\, \varphi(c)\,\Sf(\vf,\vf),
\end{align*}
the third line using $[b_k, b_{-l}] = \delta_{k,l}\,\psi_{k,-k}\,c$
from Definition~\ref{def:Hm}. Normalising by $\Sf(\vf,\vf) = 1$ gives
\eqref{eq:fock-level1}, and substituting $\psi_{k,-k} = k\,\omega_1$
gives \eqref{eq:fock-level1-hyp}.
\end{proof}

\subsection{Low-degree polynomial vectors in the Fock basis}
\label{subsec:fock-lowdeg}

We now identify the polynomial vectors $\tilde{P}_n \in \mathcal{F}_\varphi$
at low levels.

\textbf{Level 0} ($n = 0$): $\mathcal{F}_\varphi[0] = \CC\,\vf$ is
one-dimensional.
\[
  \tilde{P}_0 \;=\; \vf.
\]

\textbf{Level 1} ($n = 1$): $\mathcal{F}_\varphi[1] = \CC\,a_1^\dagger\,\vf$
is one-dimensional.
By Proposition~\ref{prop:fock-ip},
$\Sf(a_1^\dagger\,\vf,\; a_1^\dagger\,\vf) = \omega_1\,\varphi(c)$.
Assume $q = \omega_1\varphi(c) \in \RR_{>0}$, so that the square roots
below are real and positive
(Theorem~\ref{thm:shapovalov-orthogonality}\,\ref{item:orth-classical}).
Since $\Sf(\tilde{P}_1,\tilde{P}_1) = h_1 = \frac{2}{3}$, we normalise:
\begin{equation}
\label{eq:P1-fock}
  \tilde{P}_1 \;=\; \sqrt{\frac{h_1}{\omega_1\,\varphi(c)}}\;
  a_1^\dagger\,\vf
  \;=\; \sqrt{\frac{2}{3\,\omega_1\,\varphi(c)}}\;
  b_{-1}\,\vf.
\end{equation}
With the normalisation $\omega_1 = 1$, $\varphi(c) = 1$:
$\tilde{P}_1 = \sqrt{2/3}\; b_{-1}\,\vf$.

\textbf{Level 2} ($n = 2$): $\mathcal{F}_\varphi[2]$ is two-dimensional,
spanned by $\bigl\{a_2^\dagger\,\vf,\; (a_1^\dagger)^2\,\vf\bigr\}
= \bigl\{b_{-2}\,\vf,\; b_{-1}^2\,\vf\bigr\}$.
The Shapovalov form on this basis (with $\omega_1 = \varphi(c) = 1$) is:
\begin{align}
  \Sf(b_{-2}\,\vf,\; b_{-2}\,\vf)
  &= 2, \label{eq:fock-gram-22}\\
  \Sf(b_{-2}\,\vf,\; b_{-1}^2\,\vf)
  &= 0, \label{eq:fock-gram-21}\\
  \Sf(b_{-1}^2\,\vf,\; b_{-1}^2\,\vf)
  &= 2. \label{eq:fock-gram-11}
\end{align}
Equation~\eqref{eq:fock-gram-21} follows from
$\Sf(b_{-2}\,\vf,\; b_{-1}^2\,\vf) = \Sf(\vf,\; b_2\,b_{-1}^2\,\vf)$,
and $[b_2, b_{-1}^2] = 0$ (since $[b_2,b_{-1}] = 0$ for $2 \neq 1$
in the genus-zero cocycle), giving $b_2\,b_{-1}^2\,\vf = 0$.
Equation~\eqref{eq:fock-gram-11} follows from the repeated application
of the contravariance relation, using $[b_1,b_{-1}] = \omega_1 c$
and $b_1\,\vf = 0$.

The vector $\tilde{P}_2$ is the one given by
Definition~\ref{def:polynomial-vectors}: the representative, with
respect to $\Sf$, of the functional taking the value $1$ on each of
$b_{-2}\vf$ and $b_{-1}^2\vf$. Since the Gram matrix
\eqref{eq:fock-gram-22}--\eqref{eq:fock-gram-11} is diagonal with equal
entries, that representative is proportional to the sum of the two
basis vectors, and the normalisation of
Theorem~\ref{thm:shapovalov-orthogonality}\,\ref{item:orth-classical}
fixes the scalar. The proposition below records the result and
identifies the complementary direction.

\begin{proposition}
\label{prop:fock-P2}
In the Fock basis $\{b_{-2}\,\vf,\; b_{-1}^2\,\vf\}$, and with
$\omega_1 = \varphi(c) = 1$ (a value of $q$ in $\RR_{>0}$, so that
Theorem~\ref{thm:shapovalov-orthogonality}\,\ref{item:orth-classical}
applies), the polynomial vector $\tilde{P}_2$ is:
\begin{equation}
\label{eq:fock-P2}
  \tilde{P}_2
  \;=\;
  \sqrt{\frac{h_2}{4}}\,b_{-2}\,\vf
  \;+\;
  \sqrt{\frac{h_2}{4}}\,b_{-1}^2\,\vf
  \;=\;
  \sqrt{\frac{1}{10}}\bigl(b_{-2}\,\vf + b_{-1}^2\,\vf\bigr),
\end{equation}
where $h_2 = \frac{2}{5}$. Under the composite map $\Phi=\Psi\circ\psi$
of Definition~\ref{def:Phi-composite}, $\tilde{P}_2$ pulls forward to
$\Phi(\tilde{P}_2) = \sqrt{\tfrac{2}{5}}\,P_2(x) = \sqrt{h_2}\,P_2(x)$
in $\CC[x]$, in agreement with the Legendre normalisation
$P_2(1) = 1$ of Section~\ref{sec:form}.
At the same level-$2$ weight space, the orthogonal complement of
$\tilde{P}_2$ within $\mathcal{F}_\varphi[2]$ is the one-dimensional
$\Sf$-orthogonal subspace
$\CC\cdot(b_{-2}\,\vf - b_{-1}^2\,\vf)$, which coincides with
$\ker(\psi)|_{\mathcal{F}_\varphi[2]}$
(of dimension $p(2)-1 = 1$, cf.\
Lemma~\ref{lem:psi-properties}\,(\ref{psi-ii}));
this is the ``spare'' direction implicit in the strict factor
$p(n)-1$ for general $n$ in Remark~\ref{rem:fock-obstacle}.
\end{proposition}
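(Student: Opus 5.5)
The plan is to specialise Theorem~\ref{thm:shapovalov-orthogonality} to $n=2$ and $q=\omega_1\varphi(c)=1$, and then apply the definitions of $\psi$ and $\Psi$ directly. No new machinery is needed.

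First, the Gram matrix. By \eqref{eq:gram-diagonal}, or equally by \eqref{eq:fock-gram-22}--\eqref{eq:fock-gram-11}, the form on $\mathcal{F}_\varphi[2]$ is diagonal with entries $z_{(2)}q=2$ and $z_{(1,1)}q^2=2$. Formula \eqref{eq:Ptilde-closed} then gives
\[
\tilde P_2=\kappa_2'\bigl(\tfrac12 b_{-2}\vf+\tfrac12 b_{-1}^2\vf\bigr).
\]
With $\kappa_2'=1$ this vector has norm $\tfrac14\cdot 2+\tfrac14\cdot 2=1$, consistent with $\binom{q^{-1}+1}{2}=1$ in \eqref{eq:Ptilde-norm}. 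Part~\ref{item:orth-classical} therefore forces $\kappa_2'=(2/5)^{1/2}$, so
\[
\tilde P_2=\tfrac12\sqrt{2/5}\,(b_{-2}\vf+b_{-1}^2\vf)=\sqrt{1/10}\,(b_{-2}\vf+b_{-1}^2\vf)=\sqrt{h_2/4}\,(\cdots),
\]
which is \eqref{eq:fock-P2}. As a direct check, $\Sf(\tilde P_2,\tilde P_2)=\tfrac1{10}(2+2)=\tfrac25=h_2$.

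Second, the image under $\Phi$. By Definition~\ref{def:Phi-composite}, each level-$2$ monomial maps to $P_2(x)$. Linearity then gives
\[
\Phi(\tilde P_2)=2\sqrt{1/10}\,P_2(x)=\sqrt{2/5}\,P_2(x).
\]
Agreement with the normalisation $P_2(1)=1$ is built in through Definition~\ref{def:Psi}.

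Third, the complement and the kernel. We have $\Sf(b_{-2}\vf+b_{-1}^2\vf,\;b_{-2}\vf-b_{-1}^2\vf)=2-2=0$. Since $\mathcal{F}_\varphi[2]$ is two-dimensional and the form is non-degenerate ($q\neq0$, Theorem~\ref{thm:irreducibility}), the $\Sf$-orthogonal complement of $\tilde P_2$ is exactly $\CC(b_{-2}\vf-b_{-1}^2\vf)$. On the other side, $\psi(\alpha b_{-2}\vf+\beta b_{-1}^2\vf)=(\alpha+\beta)\zeta^2$, so $\ker\psi\cap\mathcal{F}_\varphi[2]$ is the line $\alpha=-\beta$, the same line. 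This matches the dimension $p(2)-1=1$ from Lemma~\ref{lem:psi-properties}\,\ref{psi-ii}.

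The only step that needs care is the normalisation constant: one has to keep the factor $\tfrac12=1/z_\lambda$ from \eqref{eq:Ptilde-closed} separate from $\kappa_2'$. Forgetting it would produce $\sqrt{1/5}$ in place of $\sqrt{1/10}$.
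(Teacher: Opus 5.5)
Your proposal is correct and follows essentially the same route as the paper. Both arguments specialise \eqref{eq:gram-diagonal} and \eqref{eq:Ptilde-closed} to $n=2$, $q=1$, fix the scalar by the normalisation of Theorem~\ref{thm:shapovalov-orthogonality}\,\ref{item:orth-classical}, and then apply $\psi$ and $\Psi$ monomial by monomial; you read $\kappa_2'$ off the formula there, where the paper solves $4\alpha^2 = 2/5$, which amounts to the same thing. The one place you go further is the complement: you check $\Sf(b_{-2}\vf+b_{-1}^2\vf,\,b_{-2}\vf-b_{-1}^2\vf)=0$ and invoke non-degeneracy on the two-dimensional weight space, whereas the paper only verifies that $b_{-2}\vf-b_{-1}^2\vf$ lies in $\ker\psi$ and leaves the orthogonality implicit.
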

\begin{proof}
At $q = 1$ the partitions of $2$ are $(2)$ and $(1,1)$, with
$z_{(2)} = 2$, $z_{(1,1)} = 2$ and $\ell = 1, 2$ respectively, so
\eqref{eq:gram-diagonal} gives both diagonal entries equal to $2$, in
agreement with \eqref{eq:fock-gram-22}--\eqref{eq:fock-gram-11}. By
\eqref{eq:Ptilde-closed} the coefficients of $\tilde{P}_2$ are then
proportional to $1/2$ and $1/2$, that is equal, so
$\tilde{P}_2 = \alpha\,(b_{-2}\vf + b_{-1}^2\vf)$ for a single scalar
$\alpha > 0$. The norm \eqref{eq:Ptilde-norm} at $n = 2$, $q = 1$ equals
$1$, and \ref{item:orth-classical} rescales it to $h_2 = 2/5$; since
$\Sf(\alpha(b_{-2}\vf + b_{-1}^2\vf), \alpha(b_{-2}\vf + b_{-1}^2\vf))
= 4\alpha^2$, the condition $4\alpha^2 = 2/5$ gives
$\alpha = 1/\sqrt{10}$.
Applying $\Phi$ then gives
$\Phi(\tilde{P}_2) = \Psi\bigl(2\alpha\,\zeta^2\bigr)
= \sqrt{2/5}\,P_2(x)$.
The complementary direction $b_{-2}\,\vf - b_{-1}^2\,\vf$ satisfies
$\psi(b_{-2}\,\vf - b_{-1}^2\,\vf) = \zeta^2 - \zeta^2 = 0$ and is therefore
the kernel direction at level $2$.
\end{proof}

\begin{remark}
\label{rem:fock-obstacle}
For every $n$ the level-$n$ space $\mathcal{F}_\varphi[n]$ has dimension
$p(n)$, the number of partitions of $n$, and one might expect the
expansion of $\tilde{P}_n$ in the Fock basis to require a
$p(n)\times p(n)$ orthogonalisation. It does not. By
Theorem~\ref{thm:shapovalov-orthogonality}\,\ref{item:orth-closed} the
Gram matrix is diagonal in that basis, with entries
$z_\lambda q^{\,\ell(\lambda)}$, so the orthogonalisation is a single
division and
\[
  \tilde{P}_n
  \;=\;
  \Bigl(\tfrac{2}{2n+1}\Bigr)^{1/2}
  \binom{q^{-1}+n-1}{n}^{-1/2}
  \sum_{\lambda \vdash n} \frac{1}{z_\lambda\,q^{\,\ell(\lambda)}}\;
  a^{\dagger}_{\lambda_1}\cdots a^{\dagger}_{\lambda_s}\,\vf
\]
for every $n \geq 0$, under the hypothesis $q \in \RR_{>0}$ of
\ref{item:orth-classical}. For $n = 1, 2$ this reproduces
\eqref{eq:P1-fock} and \eqref{eq:fock-P2}. The corresponding statement
for $m \geq 3$ is not available, since there the Gram matrix is not
diagonal.
\end{remark}

\subsection{Summary: the Fock realization in low degree}
\label{subsec:fock-summary}

\begin{corollary}
\label{cor:fock-s3}
Let $m = 2$, $r = 1$ and let $\varphi$ be Shapovalov-regular.
\begin{enumerate}[label=\textup{(\roman*)}]
  \item The Verma module $\Mf$ is canonically isomorphic to the
        Fock space $\mathcal{F}_\varphi = \CC[b_{-1},b_{-2},\ldots]\,\vf$
        as $\mathcal{H}_2$-modules.
  \item The Shapovalov form $\Sf$ coincides with the Fock inner product
        defined by \eqref{eq:fock-level1-hyp}--\eqref{eq:fock-contra}.
  \item The polynomial vectors $\tilde{P}_n$ are given in the Fock basis,
        for every $n \geq 0$, by Remark~\ref{rem:fock-obstacle}; the cases
        $n = 1, 2$ are \eqref{eq:P1-fock} and Proposition~\ref{prop:fock-P2}.
  \item If in addition $q = \omega_1\varphi(c) \in \RR_{>0}$, so that the
        normalisation of
        Theorem~\ref{thm:shapovalov-orthogonality}\,\ref{item:orth-classical}
        is available, the Gram matrix of the Fock inner product in the
        $\{\tilde{P}_n\}$ basis is
        $\Sf(\tilde{P}_n,\tilde{P}_{n'}) = \tfrac{2}{2n+1}\,\delta_{n n'}$.
        Without that hypothesis the form is still diagonal in this basis,
        with the entries \eqref{eq:Ptilde-norm}.
\end{enumerate}
\end{corollary}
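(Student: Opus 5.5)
The plan is to treat Corollary~\ref{cor:fock-s3} as an assembly of earlier results and prove its four parts in order. Each part should reduce to a result already established, and I will flag the one place where more than a citation is needed.

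Part (i) is Theorem~\ref{thm:fock-iso}. The only thing to add is the word ``canonically''. The isomorphism is the PBW identification $U(\mathcal{H}_2^-)\otimes\CC_\varphi \to \CC[b_{-1},b_{-2},\ldots]\,\vf$, given by $u\otimes 1\mapsto u\,\vf$. It involves no choices and commutes with the $\mathcal{H}_2$-action on both sides, because $\mathcal{F}_\varphi$ is by definition the submodule generated by $\vf$, with the action inherited from $\Mf$.

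Part (ii) is the step that needs an argument. The Fock inner product is defined only through the level-one values \eqref{eq:fock-level1-hyp}, the contravariance rule \eqref{eq:fock-contra}, and the normalisation $\Sf(\vf,\vf)=1$. I would show these data determine a form uniquely by induction on the level. Given $\Sf(b_{-k}v,w)=\Sf(v,b_kw)$, the vector $b_kw$ has strictly lower level, and $\Sf$ vanishes between different levels. So every pairing of two monomials reduces to a pairing at lower level, and eventually to $\Sf(\vf,\vf)$. The Shapovalov form of Lemma~\ref{lem:shapovalov-existence} satisfies all three defining conditions, since contravariance is its property (ii) with $\omega(b_{-k})=b_k$, and the level-one values are Proposition~\ref{prop:fock-ip}. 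Uniqueness then identifies the two forms. The main obstacle is to make sure this is a uniqueness argument and not a circular definition of the Fock form. Phrasing it as ``any form satisfying these three conditions equals $\Sf$'' avoids the circularity.

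Part (iii) transports the closed form \eqref{eq:Ptilde-closed} through the identification in (i), using $a_k^\dagger=b_{-k}$. With the normalisation of Theorem~\ref{thm:shapovalov-orthogonality}\,\ref{item:orth-classical}, this gives exactly the formula of Remark~\ref{rem:fock-obstacle}. The cases $n=1,2$ are \eqref{eq:P1-fock} and Proposition~\ref{prop:fock-P2}.

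For part (iv), combine (ii) with Theorem~\ref{thm:shapovalov-orthogonality}. Off-diagonal vanishing is item~\ref{item:orth-weight}, and the diagonal value $2/(2n+1)$ is item~\ref{item:orth-classical} when $q\in\RR_{>0}$. Without that hypothesis the diagonal entries are given by \eqref{eq:Ptilde-norm}. Item~\ref{item:orth-classical} is stated for $n\ge1$, so for $n=0$ I would take the normalisation $\tilde P_0=\vf$ from level $0$ of Section~\ref{subsec:fock-lowdeg}. I would also note that the formula $2/(2n+1)$ is asserted only for $n\ge 1$, or else restrict the index range to $n\ge1$ as in the theorem.
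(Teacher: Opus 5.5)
Your proposal is correct and follows the paper, which gives no separate proof of this corollary and simply collects Theorem~\ref{thm:fock-iso}, Proposition~\ref{prop:fock-ip}, Remark~\ref{rem:fock-obstacle} and Theorem~\ref{thm:shapovalov-orthogonality}; the one difference is (ii), which the paper makes definitional (Section~\ref{subsec:fock-inner-product} introduces the Fock inner product as the restriction of $\Sf$), so your uniqueness argument is extra, and it is sound provided you list level-orthogonality (or symmetry) among the defining data, since the one-sided rule \eqref{eq:fock-contra} and the normalisation alone leave $\Sf(\vf,w)$ undetermined for $w$ of positive level. Your $n=0$ caveat is well taken and applies to (iii) as well: the formula of Remark~\ref{rem:fock-obstacle} at $n=0$ gives $\sqrt{2}\,\vf$, not the convention $\tilde{P}_0=\vf$.
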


\section{Examples}
\label{sec:examples}

\subsection{The genus-zero case $m = 2$, $r = 1$}
\label{subsec:example-m2}

For the case worked out in this paper, let $p(t) = 1 - 2at + t^2$, so $A_2 = \CC[t^{\pm 1}, u] / (u^2 - p(t))$.
The four-point Heisenberg algebra $\mathcal{H}_2$ is generated by
$\{b_n : n \in \ZZ\}$ with brackets $[b_k, b_l] = k\,\delta_{k+l,0}\,\omega_1\,c$.
For concreteness, take $\omega_1 = 1$ (the normalization is immaterial for the eigenvalues).

The Verma module $\Mf$ is generated from the highest-weight vector $\vf$
by the action of the universal enveloping algebra $U(\mathcal{H}_2)$ subject to
$b_n \vf = 0$ for $n > 0$, $b_0 \vf = \varphi(b_0) \vf$ (where $\varphi(b_0)$ is a
fixed scalar, say $\varphi(b_0) = 1$ for simplicity).

The Shapovalov form is defined by $\Sf(\vf, \vf) = 1$ and the contravariance relation.
For small degrees $n = 0, 1, 2, 3, 4$, the polynomial vectors $\tilde{P}_n$ correspond
to the Legendre polynomials:
\[
  \begin{aligned}
  P_0(a) &= 1, \\
  P_1(a) &= a, \\
  P_2(a) &= \frac{1}{2}(3a^2 - 1), \\
  P_3(a) &= \frac{1}{2}(5a^3 - 3a), \\
  P_4(a) &= \frac{1}{8}(35a^4 - 30a^2 + 3).
  \end{aligned}
\]

The Gram matrix is diagonal: $\Sf(\tilde{P}_n, \tilde{P}_{n'}) = h_n \delta_{n n'}$
with $h_n = \frac{2}{2n+1}$ for $n \geq 1$, the squared norms of the
Legendre polynomials. Explicitly $h_1 = 2/3$, $h_2 = 2/5$, $h_3 = 2/7$,
$h_4 = 2/9$; at level $0$ the convention $\tilde{P}_0 = \vf$ gives
$h_0 = 1$ (Theorem~\ref{thm:shapovalov-orthogonality}\,\ref{item:orth-classical}).

These values follow from
Theorem~\ref{thm:shapovalov-orthogonality}\,\ref{item:orth-classical}:
the closed form \eqref{eq:Ptilde-closed} and the diagonal entries
\eqref{eq:gram-diagonal} give the unnormalised norm
\eqref{eq:Ptilde-norm}, which the rescaling then sends to
$2/(2n+1)$.

The Casimir operator $\Om$ acts on the basis $\{\tilde{P}_n\}$ with eigenvalues
$\lambda_n = -n(n+1)$:
\[
  \begin{aligned}
  \Om \cdot \tilde{P}_0 &= 0 \cdot \tilde{P}_0, \\
  \Om \cdot \tilde{P}_1 &= -2 \cdot \tilde{P}_1, \\
  \Om \cdot \tilde{P}_2 &= -6 \cdot \tilde{P}_2, \\
  \Om \cdot \tilde{P}_3 &= -12 \cdot \tilde{P}_3, \\
  \Om \cdot \tilde{P}_4 &= -20 \cdot \tilde{P}_4.
  \end{aligned}
\]
These values exactly match the eigenvalues $-n(n+1)$ of the Legendre differential operator
$L = (1-x^2)\partial_x^2 - 2x\,\partial_x$ of
Section~\ref{sec:casimir}, under the identification $\Psi$.

These are not separate computations: by
Lemma~\ref{lem:Omega-scalar} the element $\Om$ acts on the whole of
$\Mf[-n]$ by the scalar $-n(n+1)$, so the table records the action on
one chosen vector of each level rather than a property distinguishing
$\tilde{P}_n$ within its level. What distinguishes $\tilde{P}_n$ is
Definition~\ref{def:polynomial-vectors}.

One feature of the construction is visible already here. By
Definition~\ref{def:Hm} the zero mode $b_0$ is central in
$\mathcal{H}_2$, so the module $\Mf$, the form $\Sf$ and the vectors
$\tilde{P}_n$ depend on the weight $\varphi$ only through the central
charge $\varphi(c)$: two weights differing in $\varphi(b_0)$ give the
same polynomial family. In this sense the family is determined by the
cocycle and by $a$, not by the choice of $\varphi$. For $m \geq 3$ the
zero modes are no longer central and this independence should not be
expected.

\subsection{An illustration of the obstruction at $m = 4$}
\label{subsec:example-m4}

The following is not a worked example but an indication of what changes
beyond the genus-zero case. Consider the curve $u^4 = 1 - 2at + t^2$, so $m = 4$, $r = 1$.

The Heisenberg subalgebra $\mathcal{H}_4$ is generated by two families:
$\{b_n^{(1)} : n \in \ZZ\}$ and $\{b_n^{(2)} : n \in \ZZ\}$, with cocycle coefficients
$\psi_{kl}^{(ij)}(a)$ computed in \cite{SantosNeklyudovFutorny2025}. By
\cite[Thm.~1.1]{SantosNeklyudovFutorny2025} the space $\Omega^1_{A_4}/dA_4$ has basis
$t^{-1}dt$ together with $t^{-1}u^{l}dt$ and $t^{-2}u^{l}dt$ for $l = 1,2,3$, hence
dimension $1 + 3\cdot 2 = 7$. This agrees with the count $2g + N - 1$ of
Section~\ref{sec:setup}: the Riemann--Hurwitz formula gives $g = 1$,
the covering is unramified over $t = 0$ (since $p(0) = 1$) so four points lie there, and
$\gcd(4,2) = 2$ points lie over $t = \infty$, giving $N = 6$.

The Verma module $\Mf$ and its Shapovalov form are defined as before.
Theorem~\ref{thm:shapovalov-orthogonality} does not apply here: it is proved for
$m = 2$, $r = 1$. What one expects is that the polynomial vectors
$\{\tilde{P}_n\}_{n \geq 0}$ associated with the superelliptic family of
\cite{SantosNeklyudovFutorny2025} again satisfy
\[
  \Sf(\tilde{P}_n, \tilde{P}_{n'}) = h_n^{(4)} \delta_{n n'},
\]
where the $h_n^{(4)}$ would be the squared norms of the superelliptic
family. That this family exists, is orthogonal, and admits a positive
normalisation is not established here.

No such relation is established here, and we are not aware of a
verification of it in the literature.

The Sugawara side changes in the same way. A candidate $\Om'$ would
have to be built from both cocycle components $\omega_1$ and
$\omega_2$, and there is no reason to expect its image under an
intertwiner to be a second-order operator; the natural guess is an
operator of order $m$. Identifying it, even for $m = 4$, is open.

\section{Outlook}
\label{sec:outlook}

\label{subsec:connection-p1p3}

The observation that orthogonal polynomial families appear in the centres of
universal central extensions attached to superelliptic curves is due to
\cite{SantosNeklyudovFutorny2025}, where the centre relations are matched with
three-term recurrences and the associated generating functions are shown to
satisfy Sturm--Liouville equations. That correspondence is established there by
explicit computation with the cocycle data.

What the present paper contributes is an account of the same phenomenon inside a
module. The polynomial vectors are weight vectors of $\Mf$, their orthogonality
is a property of the contravariant form
(Theorem~\ref{thm:shapovalov-orthogonality}), and the differential operator that
annihilates them is the image of a Sugawara element under an explicit
intertwiner (Theorem~\ref{thm:rep-mechanism}). Verma-type modules over
Heisenberg subalgebras with a non-standard polarisation of the imaginary modes
were studied by \cite{BBFK2013}; the modules used here carry the standard
polarisation.

\appendix
\section{Computations}
\label{sec:computations}

\subsection{Cocycle formulas}
\label{app:cocycle}

For the genus-zero case $m = 2$, $r = 1$, the cocycle takes the simple form:
\[
  \psi_{kl}(a) = k\,\delta_{k+l,0}\, \omega_1,
  \qquad\text{so that}\qquad
  [b_k, b_l] = k\,\delta_{k+l,0}\,\omega_1\,c,
\]
where $\omega_1 > 0$ is a normalization constant. This identifies the four-point
Heisenberg algebra with the standard infinite-dimensional Heisenberg algebra,
after rescaling the central element by $\omega_1$.

For the superelliptic case $m \geq 3$, the cocycle has multiple components
$\psi_{kl}^{(ij)}(a)$ indexed by $1 \leq i, j \leq \lfloor m/2 \rfloor$, each depending
on the parameter $a$. We illustrate with the quartic case $m = 4$, $r = 1$
(i.e., $p(t) = 1 - 2at + t^2$, but now $u^4 = p(t)$), which has two families
of generators $b_n^{(1)}$ and $b_n^{(2)}$, corresponding to the $u$ and $u^2$ sectors.
The cocycle decomposes into three independent components:
\begin{align*}
  [b_k^{(1)}, b_l^{(1)}] &= \psi_{kl}^{(11)}(a) \cdot c, \\
  [b_k^{(1)}, b_l^{(2)}] &= \psi_{kl}^{(12)}(a) \cdot c, \\
  [b_k^{(2)}, b_l^{(2)}] &= \psi_{kl}^{(22)}(a) \cdot c,
\end{align*}
where each $\psi^{(ij)}_{kl}(a)$ is a polynomial in $a$ whose degree
depends on $|k+l|$ and the sector indices $i,j$. The cross-sector component
$\psi_{kl}^{(12)}(a)$ introduces genuine multi-component structure that
is absent for $m=2$.

Formulas for the sectors at branch degree $\deg p = 4$ are computed in
\cite{SantosNeklyudovFutorny2025};
the key structural observation is that the sector-$(1,1)$ recurrence is always governed by
Legendre polynomials, while the remaining sectors produce distinct (non-classical) families
whose precise form depends on $m$.

\subsection{Gram matrix entries for small $n$}
\label{app:gram}

For the genus-zero Legendre case ($m=2$, $r=1$), the Gram matrix of the
Shapovalov form in the \emph{normalised} family $\{\tilde P_n\}$ of
Theorem~\ref{thm:shapovalov-orthogonality}\,\ref{item:orth-classical}
(so under the hypothesis $q = \omega_1\varphi(c)\in\RR_{>0}$) is
diagonal:
\[
\begin{array}{c|cccccc}
  n / n' & 0 & 1 & 2 & 3 & 4 & 5 \\
\hline
  0 & 1 & 0 & 0 & 0 & 0 & 0 \\
  1 & 0 & 2/3 & 0 & 0 & 0 & 0 \\
  2 & 0 & 0 & 2/5 & 0 & 0 & 0 \\
  3 & 0 & 0 & 0 & 2/7 & 0 & 0 \\
  4 & 0 & 0 & 0 & 0 & 2/9 & 0 \\
  5 & 0 & 0 & 0 & 0 & 0 & 2/11 \\
\end{array}
\]
For $n \geq 1$ the diagonal entries are $h_n = \frac{2}{2n+1}$, the
squared norms of the Legendre polynomials; the level-$0$ entry is $1$,
by the convention $\tilde{P}_0 = \vf$. All off-diagonal entries vanish.

Before normalisation the same form gives the value
\eqref{eq:Ptilde-norm}
(Theorem~\ref{thm:shapovalov-orthogonality}\,\ref{item:orth-norm}); the
table above is the result of the rescaling in
\ref{item:orth-classical} and therefore displays the classical Legendre
norms by construction, not as an independent computation. The individual
Gram entries in the monomial basis are $z_\lambda q^{\,\ell(\lambda)}$, by
\eqref{eq:gram-diagonal}.

\section*{Acknowledgements}

The author thanks the anonymous referee for a careful reading and for
detailed comments that substantially improved the manuscript.

This work was financed, in part, by the S\~ao Paulo Research Foundation
(FAPESP), grant 2024/14914-9.

\section*{Conflict of interest}

The author declares that there is no conflict of interest.



\begin{thebibliography}{99}
\bibitem{PhiVerma}
F.~Albino dos Santos,
\textit{Irreducibility of Verma modules for a hyperelliptic Heisenberg algebra},
preprint, arXiv:1709.05663, 2017.

\bibitem{KasselLoday1982}
C.~Kassel and J.-L.~Loday,
\textit{Extensions centrales d'alg\`ebres de Lie},
Ann.\ Inst.\ Fourier (Grenoble) \textbf{32} (1982), no.~4, 119--142.

\bibitem{SantosNeklyudovFutorny2025}
F.~Albino dos Santos, M.~Neklyudov, and V.~Futorny,
\textit{Superelliptic Affine Lie algebras and orthogonal polynomials},
Forum Math.\ Sigma \textbf{13} (2025), e120, 22~pp.
\href{https://doi.org/10.1017/fms.2025.10074}{doi:10.1017/fms.2025.10074}.

\bibitem{Macdonald1995}
I.~G. Macdonald,
\textit{Symmetric Functions and Hall Polynomials},
2nd ed., Oxford Math.\ Monogr., Oxford Univ.\ Press, 1995.

\bibitem{Szego1939}
G.~Szeg\H{o},
\textit{Orthogonal Polynomials},
4th~ed., American Mathematical Society Colloquium Publications~23,
American Mathematical Society, Providence, RI, 1975.
\bibitem{BBFK2013}
V.~Bekkert, G.~Benkart, V.~Futorny and I.~Kashuba,
\textit{New irreducible modules for Heisenberg and affine Lie algebras},
J.~Algebra \textbf{373} (2013), 284--298.
\href{https://doi.org/10.1016/j.jalgebra.2012.09.035}{doi:10.1016/j.jalgebra.2012.09.035}.

\bibitem{KacRaina}
V.~G.~Kac and A.~K.~Raina,
\textit{Bombay Lectures on Highest Weight Representations of Infinite
  Dimensional Lie Algebras},
second edition, Advanced Series in Mathematical Physics~2,
World Scientific, Singapore, 2013.
ISBN 978-981-4522-18-2.

\bibitem{Shapovalov1972}
N.~N.~Shapovalov,
\textit{On a bilinear form on the universal enveloping algebra of a
  complex semisimple Lie algebra},
Funct.\ Anal.\ Appl.\ \textbf{6} (1972), 307--312.
\href{https://doi.org/10.1007/BF01077757}{doi:10.1007/BF01077757}.

\bibitem{Schlichenmaier2014}
M.~Schlichenmaier,
\textit{Krichever--Novikov Type Algebras: Theory and Applications},
De~Gruyter Studies in Mathematics~53, De~Gruyter, Berlin, 2014.

\bibitem{SchlichenmaierSurvey}
M.~Schlichenmaier,
\textit{Krichever--Novikov type algebras: an introduction},
in \textit{Lie Algebras, Lie Superalgebras, Vertex Algebras and Related
  Topics}, Proc.\ Sympos.\ Pure Math.\ \textbf{92} (2016), 325--350.
arXiv:1409.3069.
\bibitem{Schlichenmaier1990a}
M.~Schlichenmaier,
\textit{Krichever--Novikov algebras for more than two points: explicit
  generators},
Lett.\ Math.\ Phys.\ \textbf{19} (1990), 327--336.
\href{https://doi.org/10.1007/BF00429952}{doi:10.1007/BF00429952}.

\bibitem{Schlichenmaier1990b}
M.~Schlichenmaier,
\textit{Central extensions and semi-infinite wedge representations of
  Krichever--Novikov algebras for more than two points},
Lett.\ Math.\ Phys.\ \textbf{20} (1990), 33--46.
\href{https://doi.org/10.1007/BF00417227}{doi:10.1007/BF00417227}.

\bibitem{Schlichenmaier2017NPoint}
M.~Schlichenmaier,
\textit{$N$-point Virasoro algebras are multipoint Krichever--Novikov-type
  algebras},
Comm.\ Algebra \textbf{45} (2017), 776--821.
\href{https://doi.org/10.1080/00927872.2016.1175464}{doi:10.1080/00927872.2016.1175464}.

\bibitem{Bremner1995}
M.~R.~Bremner,
\textit{Four-point affine Lie algebras},
Proc.\ Amer.\ Math.\ Soc.\ \textbf{123} (1995), 1981--1989.
\href{https://doi.org/10.1090/S0002-9939-1995-1249871-8}{doi:10.1090/S0002-9939-1995-1249871-8}.

\bibitem{Cox2008FourPoint}
B.~Cox,
\textit{Realizations of the four point affine Lie algebra
  $\mathfrak{sl}(2,R)\oplus(\Omega_R/dR)$},
Pacific J.\ Math.\ \textbf{234} (2008), 261--288.
\href{https://doi.org/10.2140/pjm.2008.234.261}{doi:10.2140/pjm.2008.234.261}.

\bibitem{Sadov1991}
V.~A.~Sadov,
\textit{Bases on multipunctured Riemann surfaces and interacting strings
  amplitudes},
Comm.\ Math.\ Phys.\ \textbf{136} (1991), 585--597.
\href{https://doi.org/10.1007/BF02099075}{doi:10.1007/BF02099075}.

\bibitem{CoxGuoLuZhao2016_Torus}
B.~Cox, X.~Guo, R.~Lu and K.~Zhao,
\textit{Simple modules over the Lie algebras of divergence zero vector
  fields on a torus},
J.~Pure Appl.\ Algebra \textbf{220} (2016), no.~1, 1--21.
\href{https://doi.org/10.1016/j.jpaa.2015.05.026}{doi:10.1016/j.jpaa.2015.05.026}.
arXiv:1309.5940.

\bibitem{KacWakimoto1988}
V.~G.~Kac and M.~Wakimoto,
\textit{Modular and conformal invariance constraints in representation
  theory of affine algebras},
Adv.\ Math.\ \textbf{70} (1988), 156--236.
\href{https://doi.org/10.1016/0001-8708(88)90055-2}{doi:10.1016/0001-8708(88)90055-2}.

\bibitem{FrenkelBenZvi}
E.~Frenkel and D.~Ben-Zvi,
\textit{Vertex Algebras and Algebraic Curves},
second edition, Mathematical Surveys and Monographs~88,
American Mathematical Society, Providence, RI, 2004.

\bibitem{BilligLau2011}
Y.~Billig and M.~Lau,
\textit{Irreducible modules for extended affine Lie algebras},
J.~Algebra \textbf{327} (2011), 208--235.
\href{https://doi.org/10.1016/j.jalgebra.2010.07.044}{doi:10.1016/j.jalgebra.2010.07.044}.
arXiv:1007.1236.

\bibitem{FeiginFuchs1990}
B.~L.~Feigin and D.~B.~Fuchs,
\textit{Representations of the Virasoro algebra},
in \textit{Representation of Lie Groups and Related Topics},
Adv.\ Stud.\ Contemp.\ Math.\ \textbf{7},
Gordon and Breach, New York, 1990, pp.~465--554.
\bibitem{KricheverNovikov1987}
I.~M.~Krichever and S.~P.~Novikov,
\textit{Algebras of Virasoro type, Riemann surfaces and structures of
  the theory of solitons},
Funct.\ Anal.\ Appl.\ \textbf{21} (1987), no.~2, 126--142.

\bibitem{CoxIm2018}
B.~Cox and M.~S.~Im,
\textit{On the module structure of the center of hyperelliptic
  Krichever--Novikov algebras},
in \textit{Representations of Lie Algebras, Quantum Groups and Related
  Topics}, Contemp.\ Math.\ \textbf{713}, Amer.\ Math.\ Soc.,
Providence, RI, 2018, pp.~61--94.

\bibitem{Whittaker1927}
E.~T.~Whittaker and G.~N.~Watson,
\textit{A Course of Modern Analysis},
4th ed., Cambridge University Press, Cambridge, 1927.

\end{thebibliography}
\end{document}